\documentclass{amsart}
\usepackage{ amsmath, amsthm, amsfonts, hyperref, graphicx, ifpdf}
\usepackage[dvipsnames,usenames]{color}
\hypersetup{
   bookmarks=true,         
   unicode=false,          
   pdftoolbar=true,        
   pdfmenubar=true,        
   pdffitwindow=false,     
   pdfstartview={},    
   pdftitle={},    
   pdfauthor={Longzhi Lin},     
   pdfsubject={},   
   pdfcreator={Longzhi Lin},   
   pdfproducer={Longzhi Lin}, 
   pdfkeywords={}, 
   pdfnewwindow=true,      
   colorlinks=true,       
   linkcolor=blue,          
   citecolor=blue,        
   filecolor=magenta,      
   urlcolor=MidnightBlue          
}

\newtheorem{theorem}{Theorem}[section]
\newtheorem{lemma}[theorem]{Lemma}
\newtheorem{proposition}[theorem]{Proposition}
\newtheorem{corollary}[theorem]{Corollary}

\theoremstyle{definition}
\newtheorem{definition}[theorem]{Definition}

\theoremstyle{remark}
\newtheorem{remark}[theorem]{Remark}
\newtheorem{notation}[theorem]{\bf{Notations}}

\numberwithin{equation}{section}



\parskip1ex


\begin{document}

\title[Uniformity of harmonic map heat flow at infinite time] {Uniformity of harmonic map heat flow at infinite time}
\author{Longzhi Lin}
\address[L.~Lin]{Department of Mathematics\\Rutgers University\\110 Frelinghuysen Road\\Piscataway, NJ 08854-8019\\USA}
\email{lzlin@math.rutgers.edu}
\date{\today}

\begin{abstract}
We show an energy convexity along any harmonic map heat flow with small initial energy and fixed boundary data on the unit $2$-disk. In particular, this gives an affirmative answer to a question raised by W. Minicozzi asking whether such harmonic map heat flow converges uniformly in time strongly in the $W^{1,2}$-topology, as time goes to infinity, to the unique limiting harmonic map.

\end{abstract}

\maketitle {}

\section{Introduction}

Given a compact Riemannian manifold $\mathcal{M}$ and a closed (i.e., compact and without boundary) Riemannian manifold $\mathcal{N}$ which is an isometrically embedded submanifold of
$\mathbb{R}^n$, we can define the \textit{Dirichlet} energy of a map $u\in W^{1,2}(\mathcal{M}, \mathcal{N})$:
\begin{equation}\label{energy} \text{Energy}(u)\,=\,E(u)\,=\,\frac{1}{2}\,\int_{\mathcal{M}}\,|\nabla u|^2 dv_{\mathcal{M}}\,,
\end{equation}
where $W^{1,2}(\mathcal{M}, \mathcal{N})$ is the class of maps
$$\left\{ u \in L^{1}_{loc}(\mathcal{M},\mathbb{R}^n): \,\,\int_{\mathcal{M}}|\nabla u|^2 dv_{\mathcal{M}} < +\infty, \,\,u(x)\in \mathcal{N} \,\text{ a.e. } x \in \mathcal{M}\right\}.$$
The tension field $\tau(u)\in \Gamma(u^*(T\mathcal{N}))$ is the
vector field along $u$ representing the negative $L^2$-gradient of
$E(u)$\,. A weakly harmonic map $u$ from $\mathcal{M}$ to $\mathcal{N}$ is a critical point of the energy functional $E(u)$ in the distribution sense, i.e., the tension field $\tau(u)$ vanishes, and it solves the Euler-Lagrange equation
\begin{equation}\label{HM}
-\Delta_{\mathcal{M}} u\,=\, A(u)(\nabla u, \nabla u)\,,
\end{equation}
where $u=(u^1,...,u^n)$ and $A(u)$ denotes the second fundamental form of $\mathcal{N} \hookrightarrow \mathbb{R}^n$ at the point $u$. We refer to this system of elliptic equations as the harmonic map equation.

 A natural way to control the tension field for an energy minimizing sequence of maps and to get the existence of harmonic maps from $\mathcal{M}$ to $\mathcal{N}$ is to
consider the initial(-boundary) value problem:
\begin{equation} \label{HHF1}
  \left\{
   \begin{aligned}
    & u_t-\Delta_{\mathcal{M}} u = A(u)(\nabla u, \nabla u), && \text{on } \mathcal{M}\times (0,T)  \\
   & u(x,0) = u_0(x), && \text{for }x\in \mathcal{M}    \\
    &u(x,t) = \chi(x) = u_0|_{\partial \mathcal{M}}, &&  \text{for all } t\geq 0,\, x\in \partial \mathcal{M}  \,\,\text{  if } \,\partial \mathcal{M} \neq \emptyset\,,
   \end{aligned}
 \right.
   \end{equation}
where $u=(u^1,...,u^n)$ and $T>0$. We refer to this system of parabolic equations as the harmonic map heat flow, to the map $u_0$ as the initial data, and to the map $\chi$ as the boundary data. Given $u_0 \in W^{1,2}(\mathcal{M},\mathcal{N})$ and $\chi = u_0|_{\partial \mathcal{M}} \in W^{\frac{1}{2},2}(\partial \mathcal{M},\mathcal{N})$, we define $u \in W^{1,2}(\mathcal{M}\times [0,T], \mathcal{N})$ to be the weak solution of \eqref{HHF1} if
\begin{equation}
\int_0^T \int_{\mathcal{M}} \langle u_t, \xi\rangle + \langle \nabla u, \nabla\xi\rangle - \langle  A(u)(\nabla u, \nabla u), \xi\rangle dxdt = 0
\end{equation}
for any $\xi \in C^{\infty}_c(\mathcal{M}\times (0,T), \mathbb{R}^n)$.

In the fundamental paper \cite{ES} where the harmonic map heat flow was first introduced, Eells and Sampson proved that the harmonic map heat flow exists for all time in the case that the source domain $\mathcal{M}$ (of arbitrary dimensions) is without boundary and the target manifold $\mathcal{N}$ has non-positive sectional curvature. They also proved that there exists some sequence of times $t_i \nearrow +\infty$ such that
$$
u_\infty = \lim_{i\to\infty}u (\cdot,t_i)
$$
is a harmonic map from $\mathcal{M}$  to $\mathcal{N}$. The case in which the source domain $\mathcal{M}$ has boundary was dealt with by Hamilton in \cite{Ha} under the same curvature assumption on $\mathcal{N}$. The question of uniformity of the convergence in time of the flow considered by Eells and Sampson in \cite{ES} was left open at that stage, but it was settled later by Hartman in \cite{Har}. We shall state their results in the following theorem.

\begin{theorem}[\cite{ES, Har}]\label{ESHar}
Suppose that $\mathcal{M}$ and $\mathcal{N}$ are two closed Riemannian manifolds and that $\mathcal{N}$ has non-positive sectional curvature. Then given any $u_0 \in C^1(\mathcal{M}, \mathcal{N})$, the harmonic map heat flow has a unique solution $u\in C^1(\mathcal{M} \times [0,\infty), \mathcal{N}) \cap C^\infty(\mathcal{M} \times (0,\infty), \mathcal{N})$. Moreover,
\begin{equation}
u_\infty = \lim_{t\to\infty}u (\cdot,t)
\end{equation}
exists uniformly in $C^k$-topology for all $k\geq 0$ and $u_\infty$ is a harmonic map homotopic to $u_0$.
\end{theorem}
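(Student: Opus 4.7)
The plan is to prove the theorem in three stages: global existence and smoothness of the flow, extraction of a harmonic limit along some sequence $t_i\to\infty$, and the uniformity upgrade due to Hartman.

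For global existence, I would first embed $\mathcal{N}\hookrightarrow\mathbb{R}^n$ isometrically, view \eqref{HHF1} as a quasilinear parabolic system with quadratic gradient nonlinearity, and appeal to standard Schauder theory to obtain a unique $C^1$ solution on some maximal interval $[0,T^*)$, smooth for $t>0$. To show $T^*=\infty$, the central input is the Eells-Sampson Bochner identity for the energy density $e(u):=\tfrac{1}{2}|\nabla u|^2$:
\begin{equation}
(\partial_t-\Delta_{\mathcal{M}})\,e(u) = -|\nabla du|^2 - \langle\Ric^{\mathcal{M}}(du), du\rangle + \langle R^{\mathcal{N}}(du,du)du, du\rangle.
\end{equation}
Since $\mathcal{N}$ has non-positive sectional curvature the last term is $\leq 0$, hence $(\partial_t-\Delta_{\mathcal{M}})\,e(u)\leq C\,e(u)$ where $C$ depends only on a lower Ricci bound for $\mathcal{M}$. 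The parabolic maximum principle yields $\sup_{\mathcal{M}}e(u(\cdot,t))\leq e^{Ct}\sup_{\mathcal{M}}e(u_0)$ on $[0,T^*)$; combined with parabolic regularity this precludes blow-up and forces $T^*=\infty$.

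From the energy identity $\tfrac{d}{dt}E(u(\cdot,t))=-\int_{\mathcal{M}}|u_t|^2\,dv_{\mathcal{M}}$ together with $E\geq 0$ we obtain $\int_0^\infty\!\int_{\mathcal{M}}|u_t|^2\,dv_{\mathcal{M}}\,dt<\infty$, hence some $t_i\nearrow\infty$ with $\|u_t(\cdot,t_i)\|_{L^2}\to 0$. The uniform bound on $e(u)$ together with parabolic Schauder estimates give uniform $C^k$ control of $\{u(\cdot,t_i)\}$, so by Arzel\`a-Ascoli a subsequence converges in $C^k$ for every $k$ to a smooth limit $u_\infty$. Since $u_t(\cdot,t_i)\to 0$, the map $u_\infty$ satisfies \eqref{HM}, and it is homotopic to $u_0$ via the continuous flow.

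The main obstacle, and Hartman's contribution, is promoting this subsequential limit to a genuine $t\to\infty$ limit. The key ingredient is a convexity property enforced by $\sec_{\mathcal{N}}\leq 0$: if $u, v$ are any two solutions of the harmonic map heat flow, then $\varphi(x,t):=d_{\mathcal{N}}^2(u(x,t),v(x,t))$ is subcaloric,
\begin{equation}
(\partial_t-\Delta_{\mathcal{M}})\,\varphi\leq 0,
\end{equation}
obtained from the second variation of energy along the geodesic interpolation between $u$ and $v$ via Rauch comparison. Applying this with $v\equiv u_\infty$, which is a stationary solution of the flow, the parabolic maximum principle forces $\sup_{x\in\mathcal{M}} d_{\mathcal{N}}(u(x,t),u_\infty(x))$ to be non-increasing in $t$; since it tends to zero along $t_i$, it tends to zero as $t\to\infty$. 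This yields uniform $C^0$ convergence, and a final round of interior parabolic estimates on unit time intervals $[T,T+1]$ as $T\to\infty$ upgrades this to $C^k$ convergence for every $k$.
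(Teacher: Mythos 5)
The paper does not supply a proof of Theorem~\ref{ESHar}; it is stated purely as classical background and cited to Eells--Sampson and Hartman. So there is no internal proof to compare against, and the only question is whether your sketch is a correct reconstruction of the classical argument. It is. The three stages you identify are exactly the right ones: the Bochner identity for $e(u)$ with the sign of the curvature term controlled by $\sec_{\mathcal N}\le 0$, combined with the parabolic maximum principle, gives the a priori gradient bound that rules out finite-time blow-up; the energy identity $\tfrac{d}{dt}E=-\int|u_t|^2$ gives $\int_0^\infty\int_{\mathcal M}|u_t|^2<\infty$ and hence a subsequence $t_i$ along which $u(\cdot,t_i)$ converges in every $C^k$ to a harmonic map $u_\infty$; and Hartman's contribution is precisely the monotonicity of the supremum distance between two heat flows, which converts subsequential convergence into full convergence.

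One point worth flagging, though you implicitly address it by invoking the geodesic interpolation: the function $d_{\mathcal N}^2(\cdot,\cdot)$ is not globally smooth on $\mathcal N\times\mathcal N$ because of the cut locus. The object Hartman actually shows to be subcaloric is the squared length of the geodesic in the homotopy class singled out by the flow, equivalently the squared distance computed after lifting to the universal cover, which by Cartan--Hadamard is globally smooth and convex when $\sec_{\mathcal N}\le 0$. With that reading your inequality $(\partial_t-\Delta_{\mathcal M})\varphi\le 0$ is correct, and the rest of the argument --- monotonicity of $\max_x\varphi(x,t)$, vanishing along $t_i$, hence vanishing as $t\to\infty$, then interior parabolic estimates on $[T,T+1]$ plus interpolation to upgrade $C^0$ to $C^k$ --- is exactly the standard route.
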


Other similar uniformity results were obtainable under various of assumptions on the target manifold $\mathcal{N}$, such as $\mathcal{N}$ is real analytic (see Simon \cite{S}) or $\mathcal{N}$ admits a strictly convex function, see also Topping's interesting work \cite{T1} in this direction for harmonic map heat flow in a special case in which both the source and target manifolds are $2$-spheres $\mathbb{S}^2$.

When the dimension of the source domain $\mathcal{M}$ is two it is particularly interesting because the energy functional $E(u)$ and the harmonic map equation \eqref{HM} are conformally invariant in this critical dimension. Regarding the harmonic map heat flow \eqref{HHF1} from surfaces to a general closed target manifold $\mathcal{N}$, the first fundamental work was for the case $\partial \mathcal{M}=\emptyset$ which was due to Struwe \cite{St1}, where ``bubbles'' may
occur and have been analyzed in detail.  This result was then extended to the case $\partial \mathcal{M}\neq\emptyset$ with Dirichlet boundary condition by Chang in \cite{Ch}. If the initial energy $E(u_0)$ is sufficiently small, it is well-known by now that the weak solution of \eqref{HHF1} is smooth (in the interior) by the results of Freire \cite{Fr1, Fr2} using the so called moving frame technique introduced by H\'elein (see e.g. \cite{He1}). We state their $\varepsilon$-regularity theorem that we shall require in this paper in the following. For the self-containedness of the paper, we will include an alternative proof of this $\varepsilon$-regularity theorem using the main tool of our current work that we call Rivi\`ere's gauge decomposition, see Theorem \ref{Regularity}.

\begin{theorem}[\cite{Fr1,Fr2}, cf. \cite{St1, Ch, Wa}] \label{Chang} Let $\mathcal{M}$ be a simply-connected compact Riemannian surface and $\mathcal{N}$ be a closed Riemannian manifold. There exists $\varepsilon_0>0$ depending only on $\mathcal{M}$ and $\mathcal{N}$ such that the following is true. For each initial data $u_0 \in W^{1,2}(\mathcal{M}, \mathcal{N})$ with $E(u_0) < \varepsilon_0$ and the boundary data $\chi = u_0|_{\partial \mathcal{M}}$ in the case that $\partial \mathcal{M} \neq \emptyset$, there exists a unique global weak solution $u \in W^{1,2}(\mathcal{M}\times [0,\infty), \mathcal{N})$ for which $E(u(\cdot, t))$ is non-increasing in $t$. Also, $u$ is smooth in $\mathcal{M}\times [1,\infty)$ and for any $t_2>t_1\geq 1$ we have
\begin{equation}\label{EnergyDrease}
2\int_{t_1}^{t_2} \int_{\mathcal{M}}|u_t|^2 \,=\,  \int_{B_1} |\nabla u(\cdot, t_1)|^2-\int_{B_1}|\nabla u(\cdot, t_2)|^2\,.
\end{equation}
Moreover, there exists some sequence of times $t_i \nearrow +\infty$ such that
\begin{equation}\label{ChangConv}
u_\infty = \lim_{i\to\infty}u (\cdot,t_i)
\end{equation}
exists in the $C^k$-topology for any $k\geq0$ and $u_\infty$ is a harmonic map from $\mathcal{M}$ to $\mathcal{N}$.
\end{theorem}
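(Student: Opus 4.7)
The plan is to follow the approach of Struwe \cite{St1} combined with the small-energy $\varepsilon$-regularity of Freire \cite{Fr1, Fr2}, which is the content of Theorem \ref{Regularity} below. First I would construct a local-in-time weak solution of \eqref{HHF1} by penalization: extend $\mathcal{N}$ to a tubular neighborhood in $\mathbb{R}^n$ with nearest-point projection $\Pi$, replace the constraint $u\in\mathcal{N}$ by a penalty term $\lambda^{-1}\int_{\mathcal{M}} \dist(u,\mathcal{N})^2 \,dv_{\mathcal{M}}$, and solve the resulting semilinear parabolic system in $\mathbb{R}^n$ for each $\lambda>0$ by standard parabolic theory. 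The usual Dirichlet energy bound, combined with the fact that $A(u)\perp T_u\mathcal{N}$, lets one pass to the limit $\lambda\to 0$ and obtain a weak $W^{1,2}$-solution on a maximal time interval $[0,T_{\max})$ with the prescribed initial and boundary data.

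Next I would establish that the smallness of the energy persists and upgrades the weak solution to a smooth one. Testing \eqref{HHF1} formally against $u_t$ gives
\[
\frac{d}{dt}E(u(\cdot,t)) \,=\, -\int_{\mathcal{M}}|u_t|^2 \,dv_{\mathcal{M}} \,\leq\, 0,
\]
and integrating in time yields the identity \eqref{EnergyDrease}; a density argument makes this rigorous for the weak solution. Consequently $E(u(\cdot,t))<\varepsilon_0$ for all $t\geq 0$. Applying the $\varepsilon$-regularity Theorem \ref{Regularity} on parabolic cylinders of fixed size then yields uniform interior $C^k$-estimates for $u$ on $\mathcal{M}\times[1,\infty)$; this in turn rules out finite-time blowup by the standard continuation criterion, so $T_{\max}=\infty$. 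Uniqueness within the small-energy class follows by estimating the $L^2$-norm of the difference of two such solutions and applying Gr\"onwall's inequality, exploiting the quadratic-in-$\nabla u$ structure of $A(u)(\nabla u,\nabla u)$ together with the $L^\infty$-control from Theorem \ref{Regularity}.

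Finally, for the subsequential convergence at infinity, integrating the energy identity on $[1,\infty)$ gives
\[
\int_1^\infty \int_{\mathcal{M}} |u_t|^2 \,dv_{\mathcal{M}}\, dt \,\leq\, E(u(\cdot,1)) \,<\, \varepsilon_0,
\]
so one can extract a sequence $t_i\nearrow\infty$ along which $\|u_t(\cdot,t_i)\|_{L^2(\mathcal{M})}\to 0$. The maps $v_i:=u(\cdot,t_i)$ are approximate harmonic maps whose tension fields tend to zero in $L^2$; by the uniform $C^k$-estimates from the previous step they are pre-compact in every $C^k_{\text{loc}}$-topology, and a standard elliptic bootstrap on the limit equation $-\Delta_{\mathcal{M}}v=A(v)(\nabla v,\nabla v)$ identifies any limit $u_\infty$ as a smooth harmonic map.

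The main obstacle is the $\varepsilon$-regularity step itself, because the nonlinearity $A(u)(\nabla u,\nabla u)$ is borderline critical in two dimensions and naive Moser iteration fails; everything else reduces to fairly standard parabolic and elliptic bootstrapping once that tool is available. The paper's proof of Theorem \ref{Regularity} via Rivi\`ere's gauge decomposition is precisely what supplies this missing ingredient.
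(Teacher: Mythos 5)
You have correctly identified that Theorem \ref{Chang} is not actually proved in the paper: it is cited from \cite{Fr1,Fr2,St1,Ch,Wa}, and the paper supplies only an independent proof of the smoothness claim via Rivi\`ere's gauge (Theorem \ref{Regularity}). Your outline of existence by penalization, the energy identity, interior bootstrapping, and subsequential convergence at infinity is a faithful summary of the Struwe--Chang approach, and you correctly isolate $\varepsilon$-regularity as the borderline-critical ingredient.

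The genuine gap is the uniqueness assertion. You claim it ``follows by estimating the $L^2$-norm of the difference of two such solutions and applying Gr\"onwall's inequality, exploiting \ldots the $L^\infty$-control from Theorem~\ref{Regularity}.'' But Theorem~\ref{Regularity} only yields smoothness on $\mathcal{M}\times[1,\infty)$, while the theorem asserts uniqueness among \emph{all} weak solutions $u\in W^{1,2}(\mathcal{M}\times[0,\infty),\mathcal{N})$ with non-increasing energy, starting from rough $W^{1,2}$ initial data. Near $t=0$ one has no $L^\infty$-control of $\nabla u$ for either competitor, so the na\"ive Gr\"onwall argument fails precisely where it is needed. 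This is exactly the subtlety that required two separate papers by Freire \cite{Fr1,Fr2}, which use H\'elein's moving-frame estimates (or in the present paper's spirit, Rivi\`ere's gauge) to upgrade integrability of $\nabla u$ enough to close the uniqueness estimate for weak solutions. Uniqueness is the hard part of the theorem, not a routine corollary of the Gr\"onwall lemma. A secondary, more cosmetic issue: you invoke Theorem~\ref{Regularity} to rule out finite-time blowup, but as stated that theorem presupposes a global weak solution; what is actually used to continue the flow is a local parabolic $\varepsilon$-regularity estimate on cylinders, which must be proved independently of global existence.
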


\begin{remark}
In particular, in order to avoid the ``bubble'' (singularity) along the harmonic map heat flow, \textit{a priori} we may choose $\varepsilon_0 < K_1 + K_2$ where
$$
K_1\,=\,\inf\,\{E(v)\,\big|\,v\in W^{1,2}(\mathcal{M},\mathcal{N})\text{ and } v|_{\partial \mathcal{M}}=\chi\}
$$
and
$$
K_2\,=\,\inf\,\{E(v)\,\big|\,v:\,\mathbb{S}^2\to \mathcal{N} \text{ is non-constant and harmonic}\} \,>0\,.
$$
\end{remark}

\begin{remark}
Freire's regularity results for harmonic map heat flow in \cite{Fr1,Fr2} is a parabolic version of the regularity theorem of H\'elein which states that weakly harmonic maps from surfaces are regular (see e.g. \cite{He1}).
\end{remark}

A tempting question to ask is that, for a general closed target manifold $\mathcal{N}$ (without imposing additional geometric assumptions on $\mathcal{N}$), whether one could establish uniformity results for the harmonic map heat flow similar to Theorem \ref{ESHar}. In particular, is the convergence \eqref{ChangConv} in Theorem \ref{Chang} uniform for all time in the natural $W^{1,2}$-topology, say? In view of the conformal-invariance of the energy functional $E(u)$ in dimension two, the condition of small energy seems to be a natural candidate to work with in order to get such uniformity of the convergence in time for the flow. We will show in the following that this is indeed the case. In what follows we will concentrate on the case that source domain $\mathcal{M}$ is a simply-connected compact Riemannian surface with boundary. More precisely we focus on domains which are conformally equivalent to the unit $2$-disk $B_1 \subset \mathbb{R}^2$. From now on we will only work on $B_1$:
\begin{equation} \label{HHF}
  \left\{
   \begin{aligned}
    & u_t-\Delta u = A(u)(\nabla u, \nabla u), && \text{on } B_1\times (0,T)  \\
   & u(x,0) = u_0(x), && \text{for }x\in B_1    \\
    &u(x,t) = \chi(x) = u_0|_{\partial B_1}, &&  \text{for all } t\geq 0 \,\text{ and }\, x\in \partial B_1 \,,
   \end{aligned}
 \right.
   \end{equation}
where $\Delta$ is the usual Laplacian $\Delta = \sum_{i=1}^2\frac{\partial^2}{\partial x_i^2}$ in $\mathbb{R}^2$. All the arguments could be easily modified to apply to the general case.

\begin{notation}\label{Del-perp}
In what follows, $\nabla = (\partial_{x}, \partial_{y})$ is the gradient operator in $\mathbb{R}^2$ and $\nabla^{\perp} = (-\partial_{y}, \partial_{x})$ denotes the orthogonal gradient (i.e., $\nabla^{\perp}$ is the $\nabla$-operator rotated by $\pi/2$).
\end{notation}
Now we state the main theorem of this paper.

\begin{theorem}\label{MainThm}
Let $\mathcal{N}$ be a closed Riemannian manifold. There exist $\varepsilon_0, T_0 >0$ depending only on $\mathcal{N}$ such that if $u \in W^{1,2}(B_1\times[0,\infty),\mathcal{N})$ is a global weak solution to the harmonic map heat flow \eqref{HHF} with $E(u_0) < \varepsilon_0$, $E(u(\cdot, t))$ is non-increasing in $t$ and $u(\cdot, t)|_{\partial B_1} = \chi$ for all $t\geq 0$, then for all $t_2 >t_1 \geq T_0$ we have the energy convexity
\begin{equation}\label{MainConv}
\frac{1}{4}\int_{B_1} |\nabla u(\cdot, t_1)-\nabla u(\cdot, t_2)|^2\,\leq \, \int_{B_1} |\nabla u(\cdot, t_1)|^2-\int_{B_1}|\nabla u(\cdot, t_2)|^2\,.
\end{equation}
\end{theorem}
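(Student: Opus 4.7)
The plan is to derive \eqref{MainConv} via a direct integration-by-parts identity combined with (i) a nonlinear error estimate coming from the small-energy hypothesis and (ii) a parabolic monotonicity estimate for $u_t$. The starting point is the identity, valid because $\phi := u(\cdot,t_1) - u(\cdot,t_2)$ vanishes on $\partial B_1$:
\begin{equation*}
\int_{B_1}|\nabla\phi|^2 \;=\; \int_{B_1}|\nabla u(t_1)|^2 - \int_{B_1}|\nabla u(t_2)|^2 + 2\int_{B_1}\phi\cdot\Delta u(t_2).
\end{equation*}
Substituting the flow PDE $\Delta u(t_2) = u_t(t_2) - A(u(t_2))(\nabla u(t_2),\nabla u(t_2))$, \eqref{MainConv} reduces to bounding each of the two resulting error terms by $(E_1-E_2) + O(\varepsilon_0)\int|\nabla\phi|^2$, where $E_i := \int|\nabla u(\cdot,t_i)|^2$; the $O(\varepsilon_0)$ contribution will be absorbed into the LHS.

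For the second-fundamental-form term, the key observation is that $u(t_1), u(t_2) \in \mathcal{N}$, so a tubular-neighborhood expansion of $\mathcal{N}$ gives $|\phi^N| \leq C|\phi|^2$ (valid once $T_0$ is large so that $\phi$ is small). Since $A(u(t_2))(\nabla u(t_2),\nabla u(t_2))$ is normal to $\mathcal{N}$ at $u(t_2)$, H\"older's inequality together with the Sobolev/Gagliardo--Nirenberg estimate $\|\phi\|_{L^4}^2 \leq C\|\nabla\phi\|_{L^2}^2$ (using $\phi|_{\partial B_1}=0$) and the $\varepsilon$-regularity bound $\|\nabla u(t_2)\|_{L^4}^2 \leq C\varepsilon_0$ (from Theorem \ref{Regularity}) yields
\begin{equation*}
\Bigl|\int\phi \cdot A(u(t_2))(\nabla u(t_2),\nabla u(t_2))\Bigr| \;\leq\; C\|\phi\|_{L^4}^2\|\nabla u(t_2)\|_{L^4}^2 \;\leq\; C\varepsilon_0 \int|\nabla\phi|^2,
\end{equation*}
absorbable for $\varepsilon_0$ small. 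For the time-derivative term, writing $\phi = -\int_{t_1}^{t_2} u_s\,ds$ and applying Cauchy--Schwarz in time gives $\|\phi\|_{L^2} \leq \sqrt{t_2-t_1}\bigl(\int_{t_1}^{t_2}\|u_s\|_{L^2}^2\,ds\bigr)^{1/2}$, and hence
\begin{equation*}
\Bigl|\int\phi \cdot u_t(t_2)\Bigr| \;\leq\; \sqrt{t_2-t_1}\,\|u_t(t_2)\|_{L^2}\Bigl(\int_{t_1}^{t_2}\|u_s\|_{L^2}^2\,ds\Bigr)^{1/2}.
\end{equation*}

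The main obstacle is the pointwise-in-time factor $\|u_t(t_2)\|_{L^2}$, which I would control by establishing the parabolic monotonicity $\frac{d}{dt}\|u_t(\cdot,t)\|_{L^2}^2 \leq 0$ for $t \geq T_0$. Differentiating, using $u_{tt} = \Delta u_t + \partial_t[A(u)(\nabla u,\nabla u)]$ and integration by parts (with $u_t|_{\partial B_1} = 0$ because $u|_{\partial B_1} = \chi$ is time-independent), together with Gagliardo--Nirenberg $\|u_t\|_{L^4}^2 \leq C\|u_t\|_{L^2}\|\nabla u_t\|_{L^2}$, the $L^4$-bound on $\nabla u$, and Young's inequality, should give $\frac{d}{dt}\|u_t\|_{L^2}^2 \leq -(1-C\varepsilon_0)\|\nabla u_t\|_{L^2}^2 \leq 0$ for $\varepsilon_0$ small and $t$ past the parabolic smoothing time. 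This monotonicity yields $\|u_t(t_2)\|_{L^2}^2 \leq \frac{1}{t_2-t_1}\int_{t_1}^{t_2}\|u_s\|_{L^2}^2\,ds$, and combined with the previous display and \eqref{EnergyDrease} gives $|2\int\phi \cdot u_t(t_2)| \leq 2\int_{t_1}^{t_2}\|u_s\|_{L^2}^2\,ds = E_1 - E_2$. Plugging both error bounds into the key identity and absorbing yields $(1-C\varepsilon_0)\int|\nabla\phi|^2 \leq 2(E_1 - E_2)$, which for $\varepsilon_0$ small is \eqref{MainConv}. Rivi\`ere's gauge decomposition enters behind the scenes in the $L^4$-Morrey bound on $\nabla u$ (via Theorem \ref{Regularity}) and in justifying the parabolic regularity needed for the monotonicity argument.
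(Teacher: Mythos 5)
Your approach shares the paper's overall skeleton (integrate by parts, split into the second-fundamental-form and time-derivative terms, and prove monotonicity of $\|u_t(\cdot,t)\|_{L^2(B_1)}^2$), but it hinges on the claimed global estimate $\|\nabla u(t_2)\|_{L^4(B_1)}^2 \leq C\varepsilon_0$, which you attribute to Theorem \ref{Regularity}. That theorem yields only \emph{interior} smoothness $u \in C^\infty(B_1\times[1,\infty))$; every improved-integrability step in its proof is purely local ($W^{1,1}_{loc}$, $L^p_{loc}$, bootstrapping in $L^q_{loc}$). No control is obtained near $\partial B_1$, nor can it be, because the boundary data $\chi = u_0|_{\partial B_1}$ is merely $W^{1/2,2}$: with such rough Dirichlet data one cannot expect $\nabla u(t_2)$ to lie in $L^p(B_1)$ globally for any $p>2$. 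The paper makes this point explicitly when ruling out \eqref{TEMP50}: ``without imposing any regularity information on the boundary data $\chi$, it will be hopeless to get such strong global pointwise gradient estimate.'' Both of your key absorption steps --- the bound on $\int\phi^\perp\cdot A(u)(\nabla u,\nabla u)$ and the bound on $\int|u_t|^2|\nabla u|^2$ inside the $u_t$-monotonicity --- rely on this unavailable global $L^4$ bound, so both fail near the boundary.

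The paper's actual technical contribution is precisely the substitute for this missing estimate: it proves the endpoint Hardy-space bound $\||\nabla u(t_0)|^2\|_{h^1(B_1)} \leq C\varepsilon_0$ for $t_0\geq T_0$ (Lemma \ref{10001}), which by Theorem \ref{CDSTHM} produces a solution $\psi\in W_0^{1,2}\cap L^\infty(B_1)$ of $\Delta\psi = |\nabla u|^2$ with $\|\psi\|_{L^\infty}\leq C\varepsilon_0$. The Stokes-theorem absorption argument in Lemmas \ref{GradEst1} and \ref{utEst1} then delivers exactly the two bounds your argument needs, namely $\int|\phi|^2|\nabla u(t_2)|^2 \leq C\varepsilon_0\int|\nabla\phi|^2$ and $\int|u_t|^2|\nabla u|^2 \leq C\varepsilon_0\int|\nabla u_t|^2$, without ever invoking a global $L^p$ gradient estimate for $p>2$. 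Obtaining the $h^1$ estimate itself requires the improved global $L^\infty$ bound on Rivi\`ere's matrix $B$ (Proposition \ref{XI}, via a hidden Jacobian structure in $\Delta B$ and Wente's lemma) and the local oscillation bound on the gauge matrix $P$ (Lemma \ref{PCloseTOIDllz1}, via a hidden Jacobian structure in $\Delta P$). To repair your proposal you would need to reproduce this Hardy-space machinery; interior $\varepsilon$-regularity alone cannot close the gap at $\partial B_1$.
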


\begin{remark}
We do not know if the energy convexity \eqref{MainConv} holds for all $t_2>t_1\geq 0$, cf. \cite{Wa}. In the following arguments we agree to let $\varepsilon_0$ be sufficiently small and $T_0$ be sufficiently large as needed.
\end{remark}

Our approach to the proof of Theorem \ref{MainThm} is based on the technique that we call Rivi\`ere's gauge decomposition introduced by Rivi\`ere in \cite{Riv1}, see also Section \ref{Sec2}. Immediate applications of Theorem \ref{MainThm} are the following corollaries.

\begin{corollary}\label{COR}
Let $\mathcal{N}$ be a closed Riemannian manifold. There exists $\varepsilon_0>0$ depending only on $\mathcal{N}$ such that if $u \in W^{1,2}(B_1\times[0,\infty),\mathcal{N})$ is a global weak solution to the harmonic map heat flow \eqref{HHF} with $E(u_0) < \varepsilon_0$, $E(u(\cdot, t))$ is non-increasing in $t$ and $u(\cdot, t)|_{\partial B_1} = \chi$ for all $t\geq 0$, then
\begin{equation} u(\cdot,t) \to u_{\infty} \text{ uniformly as } t \to +\infty \text{ strongly in } W^{1,2}(B_1, \mathbb{R}^n)\,,
\end{equation}
where $u_{\infty}$ is the unique harmonic map with $E(u_{\infty})<\varepsilon_0$ and boundary data $\chi$.
\end{corollary}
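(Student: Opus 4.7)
The plan is to extract the corollary from Theorem \ref{MainThm} essentially as a Cauchy-sequence argument in $W^{1,2}$, combined with the subsequential convergence already furnished by Theorem \ref{Chang}. The starting observation is that by Theorem \ref{Chang} the energy $E(u(\cdot,t)) = \tfrac{1}{2}\int_{B_1}|\nabla u(\cdot,t)|^2$ is non-increasing in $t$ and nonnegative, so it converges to some finite limit $E_\infty$ as $t\to\infty$. Hence for any $\delta > 0$ there exists $T_1 \geq T_0$ such that $\int_{B_1} |\nabla u(\cdot,t_1)|^2 - \int_{B_1} |\nabla u(\cdot,t_2)|^2 < \delta$ whenever $t_2 > t_1 \geq T_1$, and the energy convexity \eqref{MainConv} of Theorem \ref{MainThm} at once upgrades this to $\int_{B_1}|\nabla u(\cdot,t_1) - \nabla u(\cdot,t_2)|^2 < 4\delta$. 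Thus $\{\nabla u(\cdot,t)\}_{t\geq T_0}$ is Cauchy in $L^2(B_1)$.

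Next I would exploit the time-independence of the Dirichlet data: since $u(\cdot,t)|_{\partial B_1} = \chi$ for all $t$, the difference $u(\cdot,t_1) - u(\cdot,t_2)$ lies in $W^{1,2}_0(B_1)$, and the Poincar\'e inequality promotes the $L^2$ Cauchyness of the gradients to Cauchyness in the full $W^{1,2}$ norm. Consequently $u(\cdot,t) \to u_\infty$ strongly in $W^{1,2}(B_1,\mathbb{R}^n)$ for some $u_\infty$; this limit automatically takes values in $\mathcal{N}$ a.e., has boundary trace $\chi$, and satisfies $E(u_\infty) \leq E(u_0) < \varepsilon_0$. To see that $u_\infty$ is actually harmonic, I would invoke Theorem \ref{Chang}: along the sequence $t_i \nearrow \infty$ produced there, $u(\cdot,t_i)$ converges in every $C^k$-norm to a harmonic map, which by uniqueness of limits must coincide with the $W^{1,2}$-limit $u_\infty$.

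The one place where I expect genuine work beyond what Theorem \ref{MainThm} hands us is the \emph{uniqueness} assertion: that $u_\infty$ is the only harmonic map with energy below $\varepsilon_0$ and boundary data $\chi$. My plan is to establish a stationary counterpart of \eqref{MainConv} by running the same Rivi\`ere gauge decomposition argument in the time-independent case. Specifically, for any two harmonic maps $v_1, v_2$ with $E(v_i) < \varepsilon_0$ and $v_i|_{\partial B_1} = \chi$, one should obtain an inequality of the form $\tfrac{1}{4}\int_{B_1}|\nabla v_1 - \nabla v_2|^2 \leq \int_{B_1}|\nabla v_1|^2 - \int_{B_1}|\nabla v_2|^2$, and symmetrizing in $v_1, v_2$ would force $v_1 = v_2$. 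This is the hard part because it requires an independent (though structurally parallel) analysis of the elliptic system rather than any direct corollary of the parabolic statement; once it is in hand, however, the uniform $W^{1,2}$-convergence of the flow stated in the corollary is immediate from the preceding two paragraphs.
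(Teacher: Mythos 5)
Your argument is exactly the proof the paper has in mind: the energy convexity \eqref{MainConv} of Theorem~\ref{MainThm} together with monotone convergence of $E(u(\cdot,t))$ makes $\nabla u(\cdot,t)$ Cauchy in $L^2$, Poincar\'e (using the fixed Dirichlet data) upgrades this to $W^{1,2}$-convergence, and the limit is harmonic via the subsequential convergence of Theorem~\ref{Chang}. The only place you deviate is the uniqueness of $u_\infty$: you propose to re-derive a stationary energy convexity and symmetrize, whereas the paper simply invokes the already-known elliptic energy convexity of Colding--Minicozzi \cite[Corollary 3.3]{CM1} (whose Rivi\`ere-gauge reproof, precisely what you sketch, is the content of \cite{LL}); note also that \eqref{TEMP49} is actually stronger than what you propose to prove, since only one of the two maps need be harmonic, so no symmetrization is necessary --- one applies it once with $u=u_\infty$ and $v$ the competing harmonic map, and again with the roles swapped.
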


\begin{corollary}
Let $\mathcal{M}$ be a two dimensional domain that is conformally equivalent to $B_1$ and has smooth boundary, and let $\mathcal{N}$ be a closed Riemannian manifold. Suppose the initial energy $E(u_0) < \varepsilon_0$, then the harmonic map heat flow \eqref{HHF1} with initial data $u_0\in C^{2,\alpha}(\overline{\mathcal{M}}, \mathcal{N})$ and boundary data $\chi\in C^{2,\alpha}(\partial \mathcal{M}, \mathcal{N})$, considered by Chang in \cite{Ch}, converges uniformly in time strongly in $W^{1,2}(\mathcal{M}, \mathcal{N})$ to the unique harmonic map $u_\infty \in C_{\chi}^{2,\alpha}(\overline{\mathcal{M}}, \mathcal{N})$.
\end{corollary}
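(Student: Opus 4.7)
The plan is to reduce the corollary to Corollary \ref{COR} via the two-dimensional conformal invariance of the Dirichlet energy and the harmonic map equation. By the Riemann mapping theorem combined with Kellogg--Warschawski boundary regularity (which applies since $\partial\mathcal{M}$ is smooth), there exists a conformal diffeomorphism $\phi:\overline{B_1}\to\overline{\mathcal{M}}$ with $\phi^{*}g_{\mathcal{M}}=e^{2\lambda}g_{B_1}$ for some smooth $\lambda$. The pullback $v(y,t)=u(\phi(y),t)$ has initial data $u_0\circ\phi\in C^{2,\alpha}(\overline{B_1},\mathcal{N})$ and boundary data $\chi\circ\phi\in C^{2,\alpha}(\partial B_1,\mathcal{N})$, and since energy is conformally invariant in dimension two, $E_{B_1}(u_0\circ\phi)=E_{\mathcal{M}}(u_0)<\varepsilon_0$.

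A direct computation shows that $v$ satisfies the modified parabolic system $e^{2\lambda}v_t-\Delta v=A(v)(\nabla v,\nabla v)$ on $B_1$, and that energy monotonicity persists in the form $\tfrac{d}{dt}E_{B_1}(v)=-\int_{B_1}|v_t|^2\,e^{2\lambda}\,dy$. The next step is to verify that the argument of Theorem \ref{MainThm}---which rests on Rivi\`ere's gauge decomposition (a conformally natural, local tool for systems with antisymmetric potential) and on the resulting small-energy $\varepsilon$-regularity---adapts to $v$. The spatial estimates that control $\int_{B_1}|\nabla v(\cdot,t_1)-\nabla v(\cdot,t_2)|^2$ go through unchanged because they involve only Dirichlet-type integrals, while the time-derivative contributions are absorbed using the bounds $0<c_1\le e^{2\lambda}\le c_2$ coming from the compactness and smoothness of $\overline{\mathcal{M}}$. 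The resulting (possibly non-sharp) energy convexity yields uniform convergence of $v(\cdot,t)$ in $W^{1,2}(B_1,\mathbb{R}^n)$, equivalently of $u(\cdot,t)$ in $W^{1,2}(\mathcal{M},\mathbb{R}^n)$, to some limit $u_\infty$.

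Finally, to identify $u_\infty\in C^{2,\alpha}(\overline{\mathcal{M}},\mathcal{N})$, note that $u_\infty$ is a weakly harmonic map with small energy and $C^{2,\alpha}$ boundary data: interior smoothness comes from H\'elein's regularity theorem (or equivalently from the $\varepsilon$-regularity proved in the paper via Rivi\`ere's gauge decomposition), while $C^{2,\alpha}$ regularity up to $\partial\mathcal{M}$ follows from the standard boundary analog together with Schauder estimates; uniqueness within the small-energy class is guaranteed by the harmonic-map energy convexity, which is conformally invariant and hence transfers from $B_1$ to $\mathcal{M}$. The principal technical obstacle I foresee is making the adaptation of Theorem \ref{MainThm} to the $e^{2\lambda}$-modified flow fully rigorous; however, since every spatial quantity in the proof is a two-dimensional Dirichlet-type integral (hence conformally invariant) and the conformal factor on the time-derivative side is uniformly bounded above and below on $\overline{B_1}$, this reduces to a careful bookkeeping exercise rather than requiring new analytic input.
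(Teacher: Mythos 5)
Your proposal is correct and is essentially the argument the paper leaves implicit. The paper states this corollary as an ``immediate application'' of Theorem~\ref{MainThm} and relies on the earlier remark that the $B_1$ arguments ``could be easily modified to apply to the general case''; your conformal pullback (Riemann mapping plus Kellogg--Warschawski for boundary regularity of the map), the observation that $v$ satisfies $e^{2\lambda}v_t-\Delta v = A(v)(\nabla v,\nabla v)$ with $0<c_1\le e^{2\lambda}\le c_2$ so that the Rivi\`{e}re gauge and the $h^1$-based Dirichlet estimates carry over with the obvious bounds on the time-derivative terms, and the identification of $u_\infty$ via Chang's existence/regularity result together with the conformally invariant Colding--Minicozzi energy convexity for uniqueness, is precisely the ``careful bookkeeping'' the paper is waving at. One small point worth flagging: since the conformal factor $e^{2\lambda}$ enters the constants in the adapted estimates (e.g.\ in $\|\zeta\|_{W^{2,2}}\le C\|e^{2\lambda}v_t\|_{L^2}$ and in the ratio $c_2/c_1$ when transferring the monotonicity of $\int e^{2\lambda}|v_t|^2$ to $\int|v_t|^2$), the threshold $\varepsilon_0$ for the corollary in general depends on $\mathcal{M}$ as well as $\mathcal{N}$, a dependence the paper's statement elides but which your argument would naturally surface.
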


\begin{remark}
We do not know if a harmonic map heat flow can be non-uniform without the small energy assumption. In view of the non-uniqueness results of Brezis and Coron \cite{BC1} and Jost \cite{Jo} for harmonic maps (with large energy) sharing the same boundary data on $\partial B_1$, it is quite possible that the small energy assumption is necessary for the energy convexity and uniform convergence of the flow in Theorem \ref{MainThm} and Corollary \ref{COR} to hold.
\end{remark}

\begin{remark}
In \cite{CM1} Colding and Minicozzi showed an energy convexity for weakly harmonic maps with small energy on $B_1$: there exists $\varepsilon_0>0$ such that if $u, v \in W^{1,2}(B_1, \mathcal{N})$ with $u|_{\partial B_1}=v|_{\partial B_1}, E(u)<\varepsilon_0$, and $u$ is weakly harmonic, then we have the energy convexity
\begin{equation}\label{TEMP49}
\frac{1}{2}\int_{B_1} |\nabla v-\nabla u|^2\,\leq\, \int_{B_1} |\nabla v|^2-\int_{B_1}|\nabla u|^2\,.
\end{equation}
See our recent work \cite{LL} for an alternative proof of this energy convexity using the same techniques used in the present paper. A direct consequence of \eqref{TEMP49} is that $u_\infty$ in Corollary \ref{COR} is unique in the class
$$\{v\in W^{1,2}(B_1, \mathbb{R}^n): \, E(v) < \varepsilon_0 \text{ and }v|_{\partial B_1}= \chi\}\,,$$
see \cite[Corollary 3.3]{CM1}.
\end{remark}

The paper is organized as follows. In Section \ref{Sec1} we present some heuristic arguments and elaborate on the idea of the proof of the main Theorem \ref{MainThm}. In Section \ref{Sec2} we review the main tool of our proof, namely, Rivi\`ere's gauge decomposition technique adapted to the case of harmonic map heat flow{\footnote{Alternatively, one may use the results of Freire \cite{Fr1,Fr2} where H\'elein's estimates involving moving frame were used, however, Rivi\`ere's approach seems to be more elementary and self-contained.}}. In Section \ref{Sec3} we show improved estimates for Rivi\`ere's  matrices $B$ and $P$, which are the two key ingredients of our proof. We finish the proof of our main theorem in Section \ref{Sec4}.

\subsection*{Acknowledgement}
The author would like to thank Professor William Minicozzi for his continued guidance and support during the author's Ph.D. study at The Johns Hopkins University. The author would also like to thank his collaborator Tobias Lamm for all the valuable discussions.

\section{Heuristic arguments and the idea of the proof}\label{Sec1}
In this section we will present some heuristic arguments and sketch the basic idea of the proof of Theorem \ref{MainThm}. We will abbreviate $u(\cdot,t)$ to $u(t)$. In order to prove the energy convexity \eqref{MainConv} along the harmonic map heat flow, i.e., there exists some $T_0>0$ such that for all $t_2> t_1 \geq T_0$ we have
\begin{equation}
\frac{1}{4}\int_{B_1} |\nabla u(\cdot, t_1)-\nabla u(\cdot, t_2)|^2\,\leq \, \int_{B_1} |\nabla u(\cdot, t_1)|^2-\int_{B_1}|\nabla u(\cdot, t_2)|^2\,,
\end{equation}
it suffices to show
\begin{equation}\label{Convexity1}
\Psi\,\geq\, -\left(\int_{B_1} |\nabla u(t_1)|^2 -\int_{B_1}|\nabla u(t_2)|^2\right) -\frac{1}{2}\int_{B_1} |\nabla u(t_1)-\nabla u(t_2)|^2\,,
\end{equation}
where (using that $u(\cdot,t)|_{\partial B_1} = \chi$ for all $t\geq0$ and the flow equation \eqref{HHF})
\begin{align}
 \Psi := &\int_{B_1} |\nabla u(t_1)|^2-\int_{B_1}|\nabla u(t_2)|^2-\int_{B_1} |\nabla u(t_1)-\nabla u(t_2)|^2 \notag\\
 = &\,2\int_{B_1} \langle \nabla u(t_1) - \nabla u(t_2) ,\nabla u(t_2)\rangle \notag \\
  = &\,-2\int_{B_1} \langle u(t_1) - u(t_2) , u_t(t_2)- A(u)(\nabla u, \nabla u)(t_2)\rangle\,.\label{Convexity2}
\end{align}
Now note that for any $p, q \in \mathcal{N}$, there exists some constant $C>0$ depending only on $\mathcal{N}$ such that $\left|(p-q)^{\perp}\right|\leq C |p-q|^2$, where the superscript $\perp$ denotes the normal component of a vector (see e.g. \cite[Lemma A.1]{CM2}). Therefore, using the fact that $A(u)(\nabla u, \nabla u) \perp T_u \mathcal{N}$ and Cauchy-Schwarz inequality, \eqref{Convexity2} yields
\begin{align}
\Psi\geq & -2 \left( \int_{B_1} |u(t_1)-u(t_2)|^2\right)^{\frac{1}{2}} \left( \int_{B_1} |u_t(t_2)|^2\right)^{\frac{1}{2}} - C\int_{B_1} |(u(t_1) - u(t_2))^\perp||\nabla u(t_2)|^2\notag\\
\geq & -2 \sqrt{t_2-t_1}\left( \int_{t_1}^{t_2}\int_{B_1} |u_t|^2\right)^{\frac{1}{2}} \left( \int_{B_1} |u_t(t_2)|^2\right)^{\frac{1}{2}} - C\int_{B_1} |u(t_1) - u(t_2)|^2|\nabla u(t_2)|^2,\notag
\end{align}
where we also used the smoothness and compactness of the target manifold $\mathcal{N}$. Here and throughout the rest of the paper, $C>0$ will denote a universal constant depending only on $\mathcal{N}$ unless otherwise stated.

Since we have \eqref{EnergyDrease} and $\varepsilon_0$ can always be chosen sufficiently small, we know that \eqref{Convexity1} will be achieved if we can show the following two key propositions.

\begin{proposition}\label{utEst} Let $u(x,t)$ be as in Theorem \ref{MainThm}, then there exists $T_0>0$ such that for all $t_2 > t_1 \geq T_0$ we have
\begin{equation}\label{Convexity3}
\int_{B_1} |u_t(t_2)|^2\,\leq\, \frac{1}{t_2 - t_1}\int_{t_1}^{t_2}\int_{B_1} |u_t|^2\,.
\end{equation}
\end{proposition}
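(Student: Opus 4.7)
The estimate \eqref{Convexity3} follows immediately once I establish that
\[
g(t) := \int_{B_1} |u_t(\cdot,t)|^2
\]
is non-increasing in $t$ for $t \geq T_0$: then $g(t_2) \leq g(s)$ for every $s \in [t_1,t_2]$, and averaging over $[t_1,t_2]$ gives exactly \eqref{Convexity3}. So the whole proposition reduces to showing $g'(t) \leq 0$ for $t$ sufficiently large. Note that since $u(\cdot,t)|_{\partial B_1} = \chi$ is time-independent we have $u_t|_{\partial B_1}=0$, and Theorem \ref{Chang} ensures $u$ is smooth on $B_1 \times [1,\infty)$, so the manipulations below are legitimate for $t \geq 1$.

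Differentiating the flow equation $u_t = \Delta u + A(u)(\nabla u,\nabla u)$ in $t$ gives
\[
u_{tt} = \Delta u_t + DA(u)(u_t)(\nabla u, \nabla u) + 2 A(u)(\nabla u_t,\nabla u).
\]
I then differentiate $g$, integrate by parts using $u_t|_{\partial B_1}=0$ (which turns $\int \langle u_t,\Delta u_t\rangle$ into $-\int|\nabla u_t|^2$), and use crucially that $u_t \in T_u\mathcal{N}$ (since $u(x,\cdot)$ is a curve in $\mathcal{N}$) together with $A(u)(\cdot,\cdot) \perp T_u\mathcal{N}$ to kill the last term. This yields
\[
g'(t) = -2\int_{B_1} |\nabla u_t|^2 + 2\int_{B_1}\langle u_t, DA(u)(u_t)(\nabla u,\nabla u)\rangle,
\]
and the second integral is bounded in modulus by $C\int_{B_1} |u_t|^2 |\nabla u|^2$ using only the smoothness and compactness of $\mathcal{N}$.

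The decisive step, and the point at which the small energy hypothesis really matters, is to absorb the cross term $\int|u_t|^2|\nabla u|^2$ into $\int|\nabla u_t|^2$ with a small coefficient. I would combine two ingredients: (i) the two-dimensional Ladyzhenskaya interpolation $\|u_t\|_{L^4}^2 \leq C \|u_t\|_{L^2}\|\nabla u_t\|_{L^2}$, which is available because $u_t$ vanishes on $\partial B_1$, together with the Poincar\'e inequality $\|u_t\|_{L^2}\leq C\|\nabla u_t\|_{L^2}$; and (ii) an $L^4$-estimate $\|\nabla u\|_{L^4}^2 \leq C\varepsilon_0^{1/2}$, obtained by interpolating a uniform $L^\infty$ bound on $\nabla u$ (from the $\varepsilon$-regularity for the heat flow, valid on $B_1$ for $t \geq 1$ thanks to the smoothness of the boundary data $\chi$) against $\|\nabla u\|_{L^2}^2 < 2\varepsilon_0$. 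Then H\"older gives $\int|u_t|^2|\nabla u|^2 \leq C\varepsilon_0^{1/2} \int|\nabla u_t|^2$, hence
\[
g'(t) \leq -\bigl(2 - C\varepsilon_0^{1/2}\bigr) \int_{B_1}|\nabla u_t|^2 \leq 0
\]
once $\varepsilon_0$ is sufficiently small, which is the desired monotonicity.

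I expect the main obstacle to lie in controlling $\nabla u$ up to the boundary by a quantity going to $0$ with the energy, i.e.\ the $L^4$-estimate in step (ii). This is exactly the kind of estimate that Rivi\`ere's gauge decomposition, developed in Sections \ref{Sec2}--\ref{Sec3}, is designed to furnish in a clean, self-contained way; once the needed bound on $\nabla u$ is in place, the rest of the argument is the routine integration-by-parts computation above. The role of $T_0$ is purely to place us in the smooth regime of Theorem \ref{Chang} and, if desired, to make $\int |u_t|^2$ small (using $\int_0^\infty\!\int|u_t|^2 \leq E(u_0) < \varepsilon_0$) before applying the monotonicity.
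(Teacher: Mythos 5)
Your reduction to monotonicity of $g(t)=\int_{B_1}|u_t|^2$, the differentiation of the flow equation, and the integration by parts using $u_t|_{\partial B_1}=0$ all match the paper (your use of $u_t\in T_u\mathcal{N}$ to kill $\langle u_t, A(u)(\nabla u_t,\nabla u)\rangle$ is a mild simplification of the paper's Young-inequality step, and is fine). But step (ii) contains a genuine gap. You propose to absorb $\int_{B_1}|u_t|^2|\nabla u|^2$ by means of a uniform $L^\infty(B_1)$ bound on $\nabla u(t)$, interpolated against $\|\nabla u\|_{L^2}^2<2\varepsilon_0$ to give $\|\nabla u\|_{L^4}^2\leq C\sqrt{\varepsilon_0}$. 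No such global $L^\infty$ bound is available: the boundary data $\chi=u_0|_{\partial B_1}$ is only the $W^{1/2,2}$ trace of a $W^{1,2}$ map (Theorem \ref{MainThm} imposes no further regularity on it), and the $\varepsilon$-regularity of Theorem \ref{Regularity} yields only interior smoothness $u\in C^\infty(B_1\times[1,\infty))$, with constants that degenerate as one approaches $\partial B_1$. The paper in fact explicitly flags your proposed estimate as the ``tempting but hopeless'' route right after \eqref{Convexity4}, noting that even for stationary weakly harmonic maps one cannot expect global regularity on $\overline{B_1}$ better than $W^{2,2}$. Rivi\`ere's gauge decomposition does \emph{not} upgrade $\nabla u$ to $L^4(B_1)$ or $L^\infty(B_1)$; what it delivers (via the improved estimates on $B$ and $P$ in Section \ref{Sec3}) is the strictly weaker statement $|\nabla u(t_0)|^2\in h^1(B_1)$ with $\||\nabla u(t_0)|^2\|_{h^1(B_1)}\leq C\varepsilon_0$.

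The paper's actual mechanism for the absorption is different in kind: it solves $\Delta\psi=|\nabla u(t_0)|^2$, $\psi|_{\partial B_1}=0$ via the Hardy-space Calder\'on--Zygmund Theorem \ref{CDSTHM}, getting $\|\psi\|_{L^\infty}\leq C\varepsilon_0$, and then runs the Colding--Minicozzi integration-by-parts trick with the test field $|u_t|^2\nabla\psi$ (as in Lemma \ref{GradEst1}, with $u(t_1)-u(t_2)$ replaced by $u_t$) to obtain $\int|u_t|^2|\nabla u|^2\leq C\varepsilon_0\int|\nabla u_t|^2$. You should also be aware of a self-referential subtlety your plan does not anticipate: the $h^1$ estimate on $|\nabla u(t_0)|^2$ (Lemma \ref{10001}) itself requires $\|u_t(t_0)\|_{L^2}<\sqrt{\varepsilon_0}$, i.e.\ a version of the very smallness you are trying to propagate. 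The paper resolves this with a continuity/bootstrap argument in Section \ref{Sec4}: choose $T_0$ with $\|u_t(T_0)\|_{L^2}<\sqrt{\varepsilon_0}$, use continuity of $t\mapsto\int|u_t(t)|^2$ to keep it below $2\sqrt{\varepsilon_0}$ on a short interval, conclude monotonicity there, and iterate. So the role of $T_0$ is not merely to ``place us in the smooth regime''; it is the starting point of this open-closed argument.
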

\begin{remark}\label{Struwe1}
The key point of Proposition \ref{utEst} is that \eqref{Convexity3} is valid for all $t_2 > t_1 \geq T_0$. We will see that, in fact, $\int_{B_1} |u_t(t)|^2$ is non-increasing along the flow after $T_0$, which yields \eqref{Convexity3}, cf. Lemma \ref{utEst1} and \eqref{utnon} below. A similar but weaker estimate was shown in \cite{St1} when the source domain of the heat flow is boundaryless (see equation (3.5) of \cite{St1}), which turned out to be the key estimate needed in Struwe's proof.
\end{remark}

\begin{proposition}\label{GradEst}Let $u(x,t)$ be as in Theorem \ref{MainThm}, then there exists $T_0>0$ such that for all $t_2 > t_1 \geq T_0$ we have
\begin{equation}\label{Convexity4}
\int_{B_1}|u(t_1) - u(t_2)|^2|\nabla u(t_2)|^2\,\leq\, C \varepsilon_0\int_{B_1}|\nabla u(t_1)-\nabla u(t_2)|^2\,.
\end{equation}
\end{proposition}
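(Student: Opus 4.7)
The strategy is to reduce \eqref{Convexity4} to an $L^4$-bound on the gradient of $u(t_2)$ and then derive that bound from Rivi\`ere's gauge decomposition applied to the flow equation.

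First I exploit the shared boundary data: since $u(\cdot,t_1)|_{\partial B_1} = \chi = u(\cdot,t_2)|_{\partial B_1}$, the difference $w := u(t_1) - u(t_2)$ lies in $W^{1,2}_0(B_1,\mathbb{R}^n)$. By H\"older's inequality and the two-dimensional Sobolev--Poincar\'e embedding $W^{1,2}_0(B_1) \hookrightarrow L^4(B_1)$,
\begin{align*}
\int_{B_1}|u(t_1) - u(t_2)|^2|\nabla u(t_2)|^2
&\leq \|w\|_{L^4(B_1)}^2 \,\|\nabla u(t_2)\|_{L^4(B_1)}^2 \\
&\leq C\,\|\nabla u(t_1) - \nabla u(t_2)\|_{L^2(B_1)}^2 \,\|\nabla u(t_2)\|_{L^4(B_1)}^2.
\end{align*}
Thus the whole proposition reduces to establishing the key $L^4$-gradient estimate
\begin{equation*}
\|\nabla u(\cdot,t)\|_{L^4(B_1)}^2 \leq C\varepsilon_0 \qquad \text{for all } t \geq T_0.
\end{equation*}

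To prove this bound, I rewrite the flow equation as $-\Delta u = \Omega \cdot \nabla u - u_t$, where $\Omega$ is the antisymmetric matrix-valued $1$-form encoding the second fundamental form, i.e.\ $\Omega^i_j \cdot \nabla u^j = A(u)(\nabla u,\nabla u)^i$. This is exactly the elliptic system to which Rivi\`ere's gauge decomposition of Section \ref{Sec2} applies: one produces $P \in W^{1,2}(B_1, SO(n))$ and an antisymmetric $B \in W^{1,2}(B_1, \mathfrak{so}(n))$ that recast the equation in a conservation-law form of Wente type. Together with the refined smallness of $\nabla P$ and $B$ to be proved in Section \ref{Sec3}, together with the $L^2$-smallness of the forcing term $u_t(\cdot,t_2)$ coming from Proposition \ref{utEst}, a standard Calder\'on--Zygmund/Wente argument then upgrades the $L^2$-control on $\nabla u$ to the $L^4$-control $\|\nabla u(\cdot,t)\|_{L^4(B_1)}^2 \leq C\|\nabla u(\cdot,t)\|_{L^2(B_1)}^2 \leq C\varepsilon_0$, exactly as in the elliptic regularity proof of Theorem \ref{Regularity}.

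The hard part is obtaining the genuinely quadratic gain $\varepsilon_0$ on the right-hand side, rather than the naive $\sqrt{\varepsilon_0}$ that one would read off from simply applying Sobolev embedding to $\nabla u \in L^2$. The nonlinearity $A(u)(\nabla u,\nabla u)$ lies only in $L^1$ a priori, so the improvement must come from cancellation in its antisymmetric structure, which is precisely what the gauges $P$ and $B$ unlock through their small $W^{1,2}$-norms (of order $\sqrt{\varepsilon_0}$). A secondary technical difficulty is that the estimate is required on the full disk $B_1$, not only on concentric interior balls; this forces a careful choice of the gauge $P$ near $\partial B_1$, using the smoothness and fixed nature of the boundary data $\chi$, so that the Wente/Hodge estimate propagates all the way up to the boundary.
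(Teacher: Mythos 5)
The reduction in your first display is correct as far as it goes: by H\"older and the embedding $W^{1,2}_0(B_1)\hookrightarrow L^4(B_1)$ applied to $w=u(t_1)-u(t_2)$, the proposition would indeed follow from a global bound $\|\nabla u(\cdot,t_2)\|_{L^4(B_1)}^2\leq C\varepsilon_0$. But this is where the argument breaks: such a global $L^4$ gradient estimate is simply not achievable under the hypotheses of Theorem \ref{MainThm}, and this is precisely the obstruction the paper flags right before Lemma \ref{GradEst1}. The boundary data $\chi$ is only $u_0|_{\partial B_1}\in W^{\frac12,2}(\partial B_1)$; no additional regularity is imposed. Already for the linear model ($\mathcal{N}$ containing a flat piece) a harmonic function $h$ on $B_1$ with $h'(z)=(1-z)^{-1/2}$ has arbitrarily small $\|\nabla h\|_{L^2(B_1)}^2$ after scaling yet $\|\nabla h\|_{L^4(B_1)}=\infty$. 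Your appeal to ``the smoothness and fixed nature of the boundary data $\chi$'' to push the gauge estimate to $\partial B_1$ is therefore using an assumption that the theorem does not grant; and your reference to the elliptic-regularity proof of Theorem \ref{Regularity} does not help, since that theorem is an interior statement ($u\in C^\infty(B_1\times[1,\infty))$, with $B_1$ open) and gives no control up to the boundary. The claimed reverse-H\"older $\|\nabla u\|_{L^4(B_1)}^2\leq C\|\nabla u\|_{L^2(B_1)}^2$ would, in particular, force $\nabla u\in L^4$ globally, which fails.

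What the paper does instead is aim lower and smarter. Rather than trying to improve $\nabla u$ from $L^2$ to $L^4$, it improves $|\nabla u|^2$ from $L^1(B_1)$ to the \emph{local} Hardy space $h^1(B_1)$ with the quantitative bound $\||\nabla u(t_0)|^2\|_{h^1(B_1)}\leq C\varepsilon_0$ (Lemma \ref{10001}). This is strictly weaker than $L^p$-integrability for any $p>1$, yet it is exactly the endpoint at which the Calder\'on--Zygmund-type Theorem \ref{CDSTHM} still applies, producing a $\psi\in L^\infty\cap W^{1,2}_0(B_1)$ with $\Delta\psi=|\nabla u(t_0)|^2$ and $\|\psi\|_{L^\infty}+\|\nabla\psi\|_{L^2}\leq C\varepsilon_0$. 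The convexity inequality \eqref{Convexity4} is then obtained by the integration-by-parts argument of Lemma \ref{GradEst1} using this $\psi$, not by H\"older against $L^4$. Crucially, $h^1(B_1)$ is accessible without any boundary regularity because the maximal function in Definition \ref{10003} only tests against balls $B_t(x)$ with $t<1-|x|$, so it automatically degenerates near $\partial B_1$; whereas a global $L^4$ bound on $\nabla u$ genuinely sees the boundary and cannot be won. The substantive content needed to prove the $h^1$ bound is the pair of refined Jacobian structures for $\Delta B$ and $\Delta P$ in Section \ref{Sec3}, and in particular the local oscillation estimate $|P(y,t_0)-P(x,t_0)|\leq C(\sqrt{\varepsilon_0}+\|u_t(t_0)\|_{L^2})$ from Lemma \ref{PCloseTOIDllz1}, which has no counterpart in your sketch. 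Your instinct that the antisymmetric structure and the gauges $P$, $B$ are what unlock the quadratic gain is correct; the missing idea is that the gain must be harvested at the $h^1$ level through Theorem \ref{CDSTHM}, rather than as an $L^4$ gradient bound.
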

If one was able to get
\begin{equation}\label{TEMP50}
\|\nabla u(t_2)\|_{L^\infty(B_1)}\leq C\sqrt{\varepsilon_0}\,,
\end{equation}
then \eqref{Convexity4} would have been automatically true by Poincar\'e's inequality. However, without imposing any regularity information on the boundary data $\chi$, it will be hopeless to get such strong global pointwise gradient estimate. In fact, even if we look at the stationary case, i.e., $W^{1,2}$ -weakly harmonic maps on $B_1$, it is easy to convince oneself that it is unreasonable to expect regularity with global estimates on the whole $B_1$ better than $W^{2,2}$ in general.

Nevertheless, not all hope is lost to show estimates \eqref{Convexity3} and \eqref{Convexity4}. Indeed, the following lemma is true which validates Proposition \ref{GradEst} under some extra assumption.

\begin{lemma}\label{GradEst1}
Let $u(x,t)$ be as in Theorem \ref{MainThm} and suppose that for all $t_2 > t_1 \geq T_0\geq 1$ we can solve the following Dirichlet problem for $\psi \in W^{1,2}_0 \cap L^\infty(B_1)$:
\begin{equation}\label{PSI1}
\left\{
   \begin{aligned}
     \Delta \psi\,& = \,|\nabla u(t_2)|^2 && \text{in }\, B_1\,, \\
    \psi\,&=\, 0 && \text{on }\, \partial B_1\,,\\
   \end{aligned}
 \right.
\end{equation}
with estimate
\begin{equation}\label{PSI2}
 \|\psi\|_{L^{\infty}(B_1)}+ \|\nabla \psi\|_{L^2(B_1)}\,\leq\, C\varepsilon_0\,.
\end{equation}
Then Proposition \ref{GradEst} holds.
\end{lemma}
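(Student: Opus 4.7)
The plan is to set $w := u(t_1) - u(t_2)$, which vanishes on $\partial B_1$ because $u(t_1)|_{\partial B_1} = u(t_2)|_{\partial B_1} = \chi$, and to pass the smallness $\|\psi\|_{L^\infty} \leq C\varepsilon_0$ through two integrations by parts. With the notation
\[
X \,:=\, \int_{B_1}|w|^2|\nabla u(t_2)|^2, \qquad Y \,:=\, \int_{B_1}|w|^2|\nabla\psi|^2,
\]
the equation $\Delta\psi = |\nabla u(t_2)|^2$ together with $|w|^2 = 0$ on $\partial B_1$ produces, after one integration by parts and Cauchy--Schwarz,
\[
X \,=\, \int_{B_1}|w|^2\Delta\psi \,=\, -\int_{B_1}\nabla|w|^2\cdot\nabla\psi \,\leq\, 2\int_{B_1}|w||\nabla w||\nabla\psi| \,\leq\, 2\,Y^{1/2}\|\nabla w\|_{L^2(B_1)},
\]
so $X^2 \leq 4Y\|\nabla w\|_{L^2}^2$. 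It then suffices to prove the self-improving bound $Y \leq C\varepsilon_0^2 \|\nabla w\|_{L^2}^2 + C\varepsilon_0 X$, since substitution yields a quadratic-in-$X$ inequality closing via Young's inequality to give $X \leq C'\varepsilon_0 \|\nabla w\|_{L^2}^2$ --- precisely Proposition \ref{GradEst}.

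The self-bound on $Y$ comes from a second integration by parts, this time exploiting $\psi|_{\partial B_1} = 0$: writing $Y = \int_{B_1}\nabla\psi\cdot(|w|^2\nabla\psi)$ and transferring the outer $\nabla\psi$,
\[
Y \,=\, -\int_{B_1}\psi\,\mathrm{div}\bigl(|w|^2\nabla\psi\bigr) \,=\, -\int_{B_1}\psi\,\nabla|w|^2\cdot\nabla\psi \,-\, \int_{B_1}\psi\,|w|^2\,|\nabla u(t_2)|^2.
\]
Pulling out $\|\psi\|_{L^\infty} \leq C\varepsilon_0$ and repeating the Cauchy--Schwarz from the previous step, the first piece is bounded by $2C\varepsilon_0\,Y^{1/2}\|\nabla w\|_{L^2}$ and the second by $C\varepsilon_0 X$. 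An application of Young's inequality absorbs the $Y^{1/2}$ back into the left-hand side, giving
\[
Y \,\leq\, 4C^2\varepsilon_0^2\|\nabla w\|_{L^2}^2 \,+\, 2C\varepsilon_0 X,
\]
and substituting produces $X^2 \leq 16 C^2\varepsilon_0^2\|\nabla w\|_{L^2}^4 + 8C\varepsilon_0\, X\, \|\nabla w\|_{L^2}^2$. One last Young's inequality absorbs $8C\varepsilon_0 X\|\nabla w\|_{L^2}^2 \leq X^2/2 + 32C^2\varepsilon_0^2\|\nabla w\|_{L^2}^4$ to finish.

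The conceptual obstacle is that the naive Cauchy--Schwarz estimate $X \leq 2\|w\|_{L^\infty}\|\nabla w\|_{L^2}\|\nabla\psi\|_{L^2} \leq C\varepsilon_0 \|\nabla w\|_{L^2}$ is off by a factor of $\|\nabla w\|_{L^2}$ from what is needed, and the missing factor must be recovered by keeping $|w|$ \emph{paired} with $|\nabla\psi|$ inside the $L^2$ norm --- i.e.\ by working with $Y$ rather than with $\|\nabla\psi\|_{L^2}^2$. The price is that $Y$ cannot be controlled by $\|\nabla w\|_{L^2}^2$ directly, but only through $X$ itself, forcing the bootstrap above; it is the $L^\infty$ smallness of $\psi$, rather than the $W^{1,2}_0$ bound, that drives the gain of $\varepsilon_0$. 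The vanishing of $w$ on $\partial B_1$ and of $\psi$ on $\partial B_1$ are both used essentially, one in each of the two integrations by parts.
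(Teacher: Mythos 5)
Your argument is correct and follows essentially the same path as the paper: the same two integrations by parts against $\psi$, the same quantities $X$ and $Y$, and the same use of $\|\psi\|_{L^\infty}\leq C\varepsilon_0$ as the sole source of smallness. The only cosmetic difference is in closing the loop: the paper substitutes its first inequality $X\leq 2\|\nabla w\|_{L^2}Y^{1/2}$ into the $\int|\psi||w|^2\Delta\psi$ term so that the $Y$-inequality becomes $Y\leq 4\|\psi\|_{L^\infty}\|\nabla w\|_{L^2}Y^{1/2}$ and one simply divides by $Y^{1/2}$, whereas you keep that term as $C\varepsilon_0 X$ and absorb it by two further applications of Young's inequality --- slightly longer but equivalent.
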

\begin{proof}
The proof is essentially taken from \cite{CM1}. Substituting \eqref{PSI1} into the left-hand side of \eqref{Convexity4} yields (using also that $u(t_1) = u(t_2) = \chi$ on $\partial B_1$)
\begin{align}\label{PSI3}
\int_{B_1}|u(t_1) - u(t_2)|^2&|\nabla u(t_2)|^2 = \int_{B_1}
|u(t_1) - u(t_2)|^2\Delta\psi\leq\int_{B_1} |\nabla |u(t_1) - u(t_2)|^2||\nabla
\psi| \notag\\
\leq\,&2\left(\int_{B_1}|\nabla u(t_1)-\nabla u(t_2)|^2\right)^{1/2}\left(\int_{B_1}\,|u(t_1) - u(t_2)|^2|\nabla
\psi|^2\right)^{1/2},
\end{align}
where we have applied Stokes' theorem to $\text{div}(|u(t_1)-u(t_2)|^2 \nabla\psi)$ and
used Cauchy-Schwarz inequality. Now applying Stokes' theorem to $\text{div}(|u(t_1)-u(t_2)|^2\psi \nabla\psi)$ and using that $\Delta\psi\,\geq\,0$ and \eqref{PSI3}, we have
\begin{align}\label{PSI4}
\int_{B_1}|u(t_1)-&u(t_2)|^2|\nabla \psi|^2 \leq\,\int_{B_1}
|\psi|(|u(t_1)-u(t_2)|^2\Delta\psi+|\nabla |u(t_1)-u(t_2)|^2||\nabla
\psi|)\notag\\
\leq &\,4\|\psi\|_{L^{\infty}}\left(\int_{B_1}|\nabla u(t_1)-\nabla u(t_2)|^2\right)^{1/2}\left(\int_{B_1}|u(t_1)-u(t_2)|^2|\nabla \psi|^2\right)^{1/2},
\end{align}
and so that
\begin{equation}\label{PSI5}
\left(\int_{B_1} |u(t_1)-u(t_2)|^2|\nabla \psi|^2\right)^{1/2}\leq
4\|\psi\|_{L^{\infty}}\left(\int_{B_1}|\nabla u(t_1)-\nabla u(t_2)|^2\right)^{1/2}.
\end{equation}
Finally, substituting \eqref{PSI5} back into \eqref{PSI3} and combining with \eqref{PSI2} (and choosing $\varepsilon_0$ sufficiently small) yield
\begin{align*}
\int_{B_1}|u(t_1) - u(t_2)|^2|\nabla u(t_2)|^2&\leq C \|\psi\|_{L^{\infty}}\int_{B_1}|\nabla u(t_1)-\nabla u(t_2)|^2\\
&\leq C\varepsilon_0\int_{B_1}|\nabla u(t_1)-\nabla u(t_2)|^2\,,
\end{align*}
which is just \eqref{Convexity4}.
\end{proof}

Similarly we can show the following lemma which states, under some extra condition, $\int_{B_1}|u_t(t)|^2$ is non-increasing along the harmonic map heat flow after some $T_0>0$ and Proposition \ref{utEst} could be validated in this case.
\begin{lemma}\label{utEst1}
Let $u(x,t)$ be as in Theorem \ref{MainThm}. For any $t_2 > t_1 \geq T_0 \geq 1$, suppose that for any $t_0\in [t_1, t_2]$ we can solve the following Dirichlet problem for $\psi \in W^{1,2}_0 \cap L^\infty(B_1)$:
\begin{equation}\label{PSI1-1-1}
\left\{
   \begin{aligned}
     \Delta \psi\,& = \,|\nabla u(t_0)|^2 && \text{in }\, B_1\,, \\
    \psi\,&=\, 0 && \text{on }\, \partial B_1\,,\\
   \end{aligned}
 \right.
\end{equation}
with estimate
\begin{equation}\label{PSI2-2-2}
 \|\psi\|_{L^{\infty}(B_1)}+ \|\nabla \psi\|_{L^2(B_1)}\,\leq\, C\varepsilon_0\,.
\end{equation}
Then we have
\begin{equation}\label{PSI5-5-5}
\int_{B_1} |u_t(t_2)|^2 \,\leq\,  \int_{B_1} |u_t(t_1)|^2\,.
\end{equation}
In particular, Proposition \ref{utEst} holds if \eqref{PSI1-1-1} and \eqref{PSI2-2-2} are valid for any $t_0\in[t_1,t_2]$ and any $t_2 > t_1 \geq T_0 \geq 1$.
\end{lemma}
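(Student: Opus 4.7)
The plan is to prove the strong statement that $t\mapsto\int_{B_1}|u_t(t)|^2$ is non-increasing on $[T_0,\infty)$; once this is known, \eqref{PSI5-5-5} follows immediately, and the averaging inequality $(t_2-t_1)\int_{B_1}|u_t(t_2)|^2\leq \int_{t_1}^{t_2}\int_{B_1}|u_t|^2$ then delivers \eqref{Convexity3} of Proposition \ref{utEst}. The starting point is the observation that for $t\geq 1$ the flow is smooth by Theorem \ref{Chang}, so $u_t$ satisfies the linear parabolic equation obtained by differentiating \eqref{HHF} in $t$, with vanishing Dirichlet data (because $u(\cdot,t)|_{\partial B_1}\equiv\chi$). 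Multiplying this equation by $u_t$ and integrating by parts on $B_1$ yields
\begin{equation*}
\tfrac{1}{2}\tfrac{d}{dt}\int_{B_1}|u_t|^2 = -\int_{B_1}|\nabla u_t|^2 + \int_{B_1}\bigl\langle u_t,\,\partial_t\bigl[A(u)(\nabla u,\nabla u)\bigr]\bigr\rangle\,.
\end{equation*}
The last integrand is pointwise dominated by $C|u_t|^2|\nabla u|^2 + C|u_t||\nabla u||\nabla u_t|$, and Young's inequality absorbs the cross term into $\tfrac{1}{2}\int|\nabla u_t|^2$. It therefore suffices to establish
\begin{equation*}
\int_{B_1}|u_t|^2|\nabla u(t)|^2 \leq C\varepsilon_0\int_{B_1}|\nabla u_t|^2\,,
\end{equation*}
for then $\tfrac{d}{dt}\int|u_t|^2 \leq -(2-C\varepsilon_0)\int|\nabla u_t|^2\leq 0$ provided $\varepsilon_0$ is chosen sufficiently small.

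The hypothesis on $\psi$ enters precisely here, and the trick parallels that used in the proof of Lemma \ref{GradEst1}. Substituting $|\nabla u(t)|^2=\Delta\psi$ and integrating by parts (using $u_t|_{\partial B_1}=0$ and $\psi|_{\partial B_1}=0$ to kill boundary terms), I first obtain
\begin{equation*}
\int_{B_1}|u_t|^2|\nabla u|^2 = -\int_{B_1}\nabla|u_t|^2\cdot\nabla\psi \leq 2\Bigl(\int_{B_1}|u_t|^2|\nabla\psi|^2\Bigr)^{1/2}\Bigl(\int_{B_1}|\nabla u_t|^2\Bigr)^{1/2}.
\end{equation*}
A second integration by parts applied to $\text{div}(|u_t|^2\psi\nabla\psi)$, together with $\Delta\psi\geq 0$ and $\psi\leq 0$ (by the maximum principle applied to \eqref{PSI1-1-1}), gives
\begin{equation*}
\int_{B_1}|u_t|^2|\nabla\psi|^2 \leq 2\|\psi\|_{L^\infty}\Bigl(\int_{B_1}|u_t|^2|\nabla\psi|^2\Bigr)^{1/2}\Bigl(\int_{B_1}|\nabla u_t|^2\Bigr)^{1/2} + \|\psi\|_{L^\infty}\int_{B_1}|u_t|^2|\nabla u|^2\,.
\end{equation*}
Feeding the first inequality into the second and using $\|\psi\|_{L^\infty}\leq C\varepsilon_0$, a short bootstrap (solving the resulting quadratic inequality in the quantity $\int_{B_1}|u_t|^2|\nabla\psi|^2$) closes the loop and delivers the desired bound.

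I expect the only real subtlety to be the closing of this two-step bootstrap in such a way that the smallness factor $\varepsilon_0$, rather than a generic universal constant, ends up on the right-hand side: this crucially uses the sign information $\psi\leq 0$ to absorb the self-referential term $-\int|u_t|^2\psi\Delta\psi$ with the correct sign into $\|\psi\|_{L^\infty}\int|u_t|^2|\nabla u|^2$. All the required differentiability in $t$ (of $u$ itself and of $A(u)(\nabla u,\nabla u)$) is supplied by the parabolic smoothing asserted in Theorem \ref{Chang} for $t\geq 1$.
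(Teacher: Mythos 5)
Your proof is correct and follows essentially the same route as the paper: differentiate the flow equation in $t$, test against $u_t$, then use the auxiliary function $\psi$ together with two integrations by parts (exactly paralleling Lemma \ref{GradEst1}) to absorb $\int_{B_1}|u_t|^2|\nabla u|^2$ into $C\varepsilon_0\int_{B_1}|\nabla u_t|^2$. The one small over-statement is that the sign $\psi\leq 0$ is not actually crucial: the paper closes the same loop using only $\Delta\psi\geq 0$ together with $|\psi|\leq\|\psi\|_{L^\infty}$, since $-\int|u_t|^2\psi\Delta\psi\leq\int|u_t|^2|\psi|\Delta\psi\leq\|\psi\|_{L^\infty}\int|u_t|^2|\nabla u|^2$ regardless of the sign of $\psi$.
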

\begin{proof}
Differentiate the flow equation \eqref{HHF} with respect to $t$, multiply with $u_t$, and integrate over $B_1\times [t_1,t_2]$, we have (e.g. treating $u_t$ as a difference quotient: $u_t(\cdot,t) = \lim_{h\to 0^+}(u(\cdot, t+h)-u(\cdot,t))/h$ which is zero on $\partial B_1$ for all $t\geq1$)
\begin{align}\label{PSI3-3-3}
\frac{1}{2}\int_{t_1}^{t_2}\int_{B_1} \partial_t |u_t|^2 + &\int_{t_1}^{t_2}\int_{B_1} |\nabla u_t|^2 \leq C \int_{t_1}^{t_2}\int_{B_1}|u_t|^2|\nabla u|^2 + |u_t||\nabla u||\nabla u_t|\notag\\
\leq &\, \frac{1}{2}\int_{t_1}^{t_2}\int_{B_1} |\nabla u_t|^2  + C \int_{t_1}^{t_2}\int_{B_1}|u_t|^2|\nabla u|^2\,.
\end{align}
Since \eqref{PSI1-1-1} and \eqref{PSI2-2-2} are valid for any $t_0\in [t_1, t_2]$, we can use the same argument as in the proof of Lemma \ref{GradEst1} to get estimate for
$$\int_{B_1}|u_t|^2|\nabla u|^2$$
at the time $t_0$ slice. Indeed, similar to \eqref{Convexity4} (i.e., replacing $u(t_1)- u(t_2)$ by $u_t(t_0)$), for any $t_0\in [t_1, t_2]$ we have
\begin{equation}\label{PSI4-4-4}
\int_{B_1}|u_t|^2|\nabla u|^2(t_0) \leq C \|\psi\|_{L^{\infty}}\int_{B_1}|\nabla u_t(t_0)|^2\leq C\varepsilon_0\int_{B_1}|\nabla u_t(t_0)|^2\,.
\end{equation}
Inserting \eqref{PSI4-4-4} back into \eqref{PSI3-3-3} (for any $t_0\in [t_1, t_2]$) we see that the right-hand side of \eqref{PSI3-3-3} can be absorbed into the the left-hand side if we choose $\varepsilon_0$ sufficiently small. This implies that we have \eqref{PSI5-5-5} for any such $t_2 > t_1 \geq T_0$.

If \eqref{PSI1-1-1} and \eqref{PSI2-2-2} are valid for any $t_0\in[t_1,t_2]$ and any $t_2 > t_1 \geq T_0 \geq 1$, then, in view of \eqref{PSI5-5-5}, estimating by the mean value of $|u_t|^2$ over $B_1\times[t_1,t_2]$ gives Proposition \ref{utEst}.
\end{proof}

Therefore, everything boils down to validating the assumptions in Lemmas \ref{GradEst1} and \ref{utEst1}, i.e., the existence of such functions $\psi$'s satisfying \eqref{PSI1}, \eqref{PSI2} and \eqref{PSI1-1-1}, \eqref{PSI2-2-2}, respectively, for any $t_0\geq T_0$ for some $T_0\geq 1$. We point out that, a prior we only know that the energy density $|\nabla u(t)|^2$ lies in $L^1(B_1)$ with global estimate $\||\nabla u(t)|^2\|_{L^1(B_1)} \leq \varepsilon_0$ for any fixed $t$. But $L^1$ is the borderline case in which the standard $L^p$-theory for the Dirichlet problem \eqref{PSI1} with estimate \eqref{PSI2} fails!

However, the following regularity theorem for boundary value problems in the local Hardy space $h^1(B_1)$ sheds a new light to validate the assumptions in Lemmas \ref{GradEst1} and \ref{utEst1}. Here the local Hardy space $h^1(B_1)$ is a strict subspace of $L^1(B_1)$ and we will recall its definition in Definition \ref{10003} below.
\begin{theorem}\label{CDSTHM}(cf. \cite[Theorem 1.100]{Sem} and \cite[Theorem 5.1]{CKS})
Let $f\in h^1(B_1)$ and assume that $f\geq 0$ a.e. in $B_1$. Then there exists a function $\psi \in L^\infty\cap W_0^{1,2}(B_1)$ solving the Dirichlet problem
 \begin{equation}
  \left\{
   \begin{aligned}
     \Delta \psi\,&=\, f && \text{in }\, B_1\,, \\
    \psi\,&=\, 0 && \text{on }\, \partial B_1\,.\\
   \end{aligned}
 \right.
   \end{equation}
Moreover, there exists a constant $C>0$ such that
\begin{equation}
\|\psi\|_{L^{\infty}(B_1)}  + \|\nabla \psi\|_{L^2(B_1)} \,\leq\, C \,\|f\|_{h^1(B_1)}\,.
\end{equation}
\end{theorem}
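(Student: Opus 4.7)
The plan is to follow the Semmes and Chang-Krantz-Stein approach, deriving both estimates from the $h^1$-$bmo$ duality together with elementary properties of the Dirichlet Green's function $G$ of $-\Delta$ on $B_1$. I would define
$$\psi(x) := \int_{B_1} G(x,y)\,f(y)\,dy,$$
which converges since $f \in L^1(B_1)$ and $|G(x,y)| \leq C(1+|\log|x-y||)$ in two dimensions, and which is a distributional solution of $\Delta \psi = f$ with $\psi|_{\partial B_1} = 0$ by standard elliptic theory.

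For the $L^\infty$ bound, I would exploit the decomposition $G(x,y) = -(2\pi)^{-1}\log|x-y| + h(x,y)$, where $h$ is smooth up to the boundary. For each fixed $x \in B_1$, the singular kernel $y \mapsto \log|x-y|$ has $bmo(B_1)$-norm bounded by a universal constant (a classical fact), and the smooth remainder $h(x,\cdot)$ is uniformly bounded. Hence $\|G(x,\cdot)\|_{bmo(B_1)} \leq C$ independently of $x$, and the $h^1$-$bmo$ duality pairing (Theorem 1.100 of \cite{Sem} or \cite[Thm 5.1]{CKS}) yields
$$|\psi(x)| = \left|\int_{B_1} G(x,y)\,f(y)\,dy\right| \leq C\,\|G(x,\cdot)\|_{bmo(B_1)}\,\|f\|_{h^1(B_1)} \leq C\,\|f\|_{h^1(B_1)}.$$
The $W^{1,2}_0$ bound then follows from the energy identity: multiplying $\Delta\psi = f$ by $\psi$ and integrating by parts, legitimate because $\psi$ is bounded with vanishing boundary trace, gives
$$\int_{B_1}|\nabla \psi|^2 = -\int_{B_1}\psi\,f \leq \|\psi\|_{L^\infty(B_1)}\,\|f\|_{L^1(B_1)} \leq C\,\|f\|_{h^1(B_1)}^2,$$
using $\|f\|_{L^1}\leq \|f\|_{h^1}$ together with the previous step.

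The main technical obstacle I anticipate is the careful handling of the local Hardy space on a bounded domain: one must verify the correct version of the duality (with the truncation appearing in the definition of $bmo(B_1)$ rather than global BMO), and rigorously justify that $\psi$ actually belongs to $W^{1,2}_0(B_1)$ with vanishing Sobolev trace, not merely satisfying zero boundary values in a distributional sense. I would handle both points by approximation: mollify $f$ to a smooth nonnegative sequence $f_\eps \to f$ in $h^1(B_1)$, solve classically to obtain $\psi_\eps$ obeying the estimates with $\|f_\eps\|_{h^1}$ in place of $\|f\|_{h^1}$, and then pass to the limit using the uniform bounds together with weak compactness in $W^{1,2}_0$ and weak-$*$ compactness in $L^\infty$. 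The sign assumption $f \geq 0$ plays no role in proving the estimates themselves; it is recorded because the applications in Lemmas \ref{GradEst1} and \ref{utEst1} rely on $\Delta \psi \geq 0$.
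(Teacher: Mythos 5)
Your approach — Green's function representation plus $h^1$--$bmo$ duality, with the $W^{1,2}_0$ bound obtained from the energy identity after the $L^\infty$ bound is in hand — is exactly the route the paper takes, in the sense that the paper defers the proof to \cite[Theorem A.4]{LL} with the remark that it ``follows along the lines of a corresponding result in \cite{Sem}''; you are reconstructing the Semmes/Chang--Krantz--Stein argument, and you have correctly identified (and proposed to handle by mollification) the genuinely delicate point, namely the form of the duality for the local Hardy space on a bounded domain.

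There is one real error in the sketch as written: the claim that the ``smooth remainder'' $h(x,\cdot)$ in the decomposition $G(x,y) = -(2\pi)^{-1}\log|x-y| + h(x,y)$ is uniformly bounded is false. For the unit disk one has, in complex notation, $h(x,y) = (2\pi)^{-1}\log|1-\bar{x}y|$, and taking $y=x$ gives $h(x,x) = (2\pi)^{-1}\log(1-|x|^2) \to -\infty$ as $|x|\to 1$; so $h(x,\cdot)$ blows up near the boundary when $x$ is close to the boundary. What \emph{is} true, and what the argument actually needs, is that $h(x,\cdot)$ lies in $bmo(B_1)$ with norm bounded uniformly in $x$: indeed $\log|1-\bar{x}y| = \log|x| + \log|y - 1/\bar{x}|$, i.e. up to a $y$-constant it is the logarithm of the distance to the reflected point $1/\bar{x}\notin \overline{B_1}$, and logarithms of distance to a fixed point are in BMO with a universal bound. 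Similarly $\log|x-\cdot|$ has uniformly bounded $bmo$ seminorm, and $G(x,\cdot)$ has a uniform $L^1(B_1)$ bound, so the full $bmo(B_1)$ norm of $G(x,\cdot)$ (oscillation part plus whatever global term Miyachi's duality requires) is indeed bounded uniformly in $x$. Once you replace ``$h(x,\cdot)$ is uniformly bounded'' with ``$G(x,\cdot)$ lies in $bmo(B_1)$ uniformly in $x$'', the duality step goes through as you describe. A second, trivial, issue: with $G$ the Green's function of $-\Delta$ (so that $G = -(2\pi)^{-1}\log|x-y| + h$ with $h$ harmonic in $y$), your $\psi=\int G f$ solves $-\Delta\psi = f$, so you should take $\psi = -\int G f$ to match the sign in the statement; this does not affect any estimate. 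The rest — Hölder to get $\int|\nabla\psi|^2 = -\int \psi f \le \|\psi\|_{L^\infty}\|f\|_{L^1} \le C\|f\|_{h^1}^2$, and the mollification/weak-compactness argument to obtain genuine $W^{1,2}_0$ membership — is correct.
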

\begin{proof}
An elementary proof can be found in \cite[Theorem A.4]{LL}, which follows along the lines of a corresponding result in \cite{Sem}.
\end{proof}
\begin{remark}
This theorem can be thought of as a generalization of the Wente's lemma \ref{Wen} in Section \ref{Sec2}, cf. \cite{Mu}. For a more general version of this theorem we refer to Chang, Krantz and Stein's work in \cite{CKS}.
\end{remark}

\begin{definition}\label{10003}(\cite{Mi}) Choose a Schwartz function $\phi \in C^{\infty}_0(B_1)$ such that $\int_{B_1} \phi \,dx =1$ and let $\phi_t(x) = t^{-2}\phi\left(\frac{x}{t}\right)$. For a measurable function $f$ defined in $B_1$ we say that $f$ lies in the local Hardy space $h^1(B_1)$ if the radial maximal function of $f$
\begin{equation}\label{Maxima}
f^{\ast}(x) = \sup_{0<t< 1-|x|}\left| \int_{B_t(x)}\frac{1}{t^2}\phi\left(\frac{x-y}{t}\right) f(y)dy\right|(x)= \sup_{0<t< 1-|x|}\left| \phi_t \ast f\right|(x)
\end{equation}
belongs to $L^1(B_1)$ and we define
\begin{equation}
\|f\|_{h^1(B_1)} = \|f^{\ast}(x)\|_{L^1(B_1)}\,.
\end{equation}
It follows immediately that $h^1(B_1)$ is a strict subspace of $L^1(B_1)$ and $\|f\|_{L^1(B_1)}\leq \|f\|_{h^1(B_1)}$. It is also clear that if $f\in L^p(B_1)$ for some $p>1$ then $\|f\|_{h^1(B_1)}\leq C\|f\|_{L^p(B_1)}$.
\end{definition}

We shall remark that the local Hardy spaces $h^1$ (or the global version $\mathcal{H}^1$) act as replacements to $L^1$ in Calderon-Zygmund estimates. Therefore by Theorem \ref{CDSTHM}, if we can somehow manage to obtain a ``slightly'' improved global estimate for $|\nabla u|^2$ from $L^1(B_1)$ to $h^1(B_1)$ for all $t_0\geq T_0$, it will be sufficient to validate the assumptions in Lemmas \ref{GradEst1} and \ref{utEst1}. As mentioned above, the subtlety is that, without imposing any regularity information on the boundary data $\chi$, global estimates are very difficult to obtain.

The rest of the paper is devoted to validating the assumptions in Lemmas \ref{GradEst1} and \ref{utEst1}. Namely, in view of Theorem \ref{CDSTHM}, it suffices to show there exists $T_0>0$ such that
\begin{equation}\label{BoilDownTo}
\||\nabla u(t_0)|^2\|_{h^1(B_1)}\leq C\varepsilon_0 \quad \text{for any } t_0\geq T_0.
\end{equation}
The point here is that no pointwise estimate on $\nabla u$ such as \eqref{TEMP50} is needed, and instead, a (weaker) improved global integral estimate \eqref{BoilDownTo} will be sufficient and turns out to be the key to the proof of the main Theorem \ref{MainThm}.

\section{Analysis of harmonic map heat flow using Rivi\`ere's gauge}\label{Sec2}
Regarding the regularity of weakly harmonic maps from surfaces, H\'elein (see e.g. \cite{He1}) proved the interior regularity with the help of the so called Coulomb or moving frame, and Qing \cite{Q} showed the continuity up to the boundary in the case of continuous boundary data based on H\'elein's technique. More recently, Rivi\`ere \cite{Riv1} succeeded in writing the $2$-dimensional conformally invariant non-linear system of elliptic PDE's (which includes the weakly harmonic map equation \eqref{HM}) in the following form:
\begin{equation}\label{CIPDE1}
-\Delta u^i\,=\, \Omega^i_{j}\cdot \nabla u^j \quad i=1,2,...,n \quad\text{or}\quad -\Delta u\,=\, \Omega\cdot \nabla u\,,
\end{equation}
with $\Omega=(\Omega^i_j)_{1\leq i,j\leq n}\in L^2(B_1, so(n)\otimes\wedge^1\mathbb{R}^2)$ and $\Omega^i_{j}=-\Omega^j_{i}$ (antisymmetry). Here and throughout the paper, the Einstein summation convention is used. We refer to the system of equations \eqref{CIPDE1} as Rivi\`ere's equation. In particular, this special form of the nonlinearity enabled Rivi\`ere to obtain a conservation law for this system of PDE's (see \eqref{A-B-3} below), which is accomplished via a technique that we call Rivi\`ere's gauge decomposition. More precisely, following the strategy of Uhlenbeck in \cite{Uh}, Rivi\`ere \cite{Riv1} used the algebraic feature of $\Omega$, namely $\Omega$ being antisymmetric, to construct $\xi\in W^{1,2}_{0}(B_1, so(n))$ and a gauge transformation matrix $P \in W^{1,2}\cap L^\infty(B_1, SO(n))$ (which pointwise almost everywhere is an orthogonal matrix in $\mathbb{R}^{n\times n}$) satisfying some good properties.
\begin{theorem}\label{PCloseTOID}(\cite[Lemma A.3]{Riv1}) There exist $\varepsilon>0$ and $C>0$ such that for every $\Omega$ in $L^2(B_1, so(n)\otimes\wedge^1\mathbb{R}^2)$ satisfying
$$
\int_{B_1}|\Omega|^2\,\leq\, \varepsilon\,,
$$
there exist $\xi\in W_0^{1,2}(B_1, so(n))$ and $P\in W^{1,2}(B_1, SO(n))$ such that
\begin{equation}\label{P-1}
\nabla^{\perp}\xi \,=\, P^{T}\nabla P + P^T\Omega P \text{ in} \,\,B_1 \quad\text{with}\quad \xi \,=\,0\text{ on} \,\,\partial B_1,
\end{equation}
and
\begin{equation}\label{P-2}
\|\nabla \xi\|_{L^2(B_1)}+ \|\nabla P\|_{L^2(B_1)} \,\leq\, C\|\Omega\|_{L^2(B_1)}\,.
\end{equation}
Here the superscript $T$ denotes the transpose of a matrix.
\end{theorem}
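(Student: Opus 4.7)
The plan is to follow Uhlenbeck's Coulomb gauge construction, adapted by Rivi\`ere to the antisymmetric $L^2$ setting. The key observation is that producing $P$ and $\xi$ with $\nabla^\perp \xi = P^T\nabla P + P^T\Omega P$ and $\xi|_{\partial B_1} = 0$ amounts to choosing the gauge transformation $P$ so that the $so(n)$-valued $1$-form $A := P^T\nabla P + P^T\Omega P$ is divergence-free on $B_1$ with vanishing normal trace along $\partial B_1$; since $B_1$ is simply connected, the Poincar\'e lemma then furnishes the scalar potential $\xi$ with the required homogeneous Dirichlet data, and gives $\|\nabla\xi\|_{L^2} = \|A\|_{L^2}$.

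Concretely, I would construct $P$ by the direct method: minimize
\begin{equation*}
\mathcal{F}(P) \,:=\, \tfrac{1}{2}\int_{B_1}\bigl|P^T\nabla P + P^T\Omega P\bigr|^2 \qquad\text{over}\qquad P \in W^{1,2}(B_1, SO(n)),
\end{equation*}
with no boundary data prescribed on $P$. Pointwise compactness of $SO(n)$ gives $|P|\leq 1$ and $|\nabla P| = |P^T\nabla P|$, hence the coercivity estimate $\|\nabla P\|_{L^2} \leq \sqrt{2\mathcal{F}(P)} + \|\Omega\|_{L^2}$. A minimizing sequence is therefore bounded in $W^{1,2}$, and Rellich--Kondrachov in dimension two together with the closedness of $SO(n)$ under a.e.\ limits yields a minimizer $P_*$. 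Comparing with the trial function $P \equiv I$ gives immediately
\begin{equation*}
\|A\|_{L^2}^2 \,=\, 2\mathcal{F}(P_*) \,\leq\, 2\mathcal{F}(I) \,=\, \|\Omega\|_{L^2}^2,
\end{equation*}
which combined with the coercivity bound produces \eqref{P-2} with a universal constant.

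The Euler--Lagrange equation would be extracted by varying $P_s := P_* e^{sU}$ for arbitrary $U \in W^{1,2}(B_1, so(n))$: a direct computation using the Duhamel formula for $\partial_s e^{sU}|_{s=0} = U$ gives
\begin{equation*}
\partial_s\bigl(P_s^T\nabla P_s + P_s^T\Omega P_s\bigr)\big|_{s=0} \,=\, \nabla U + [A, U],
\end{equation*}
and the commutator term contributes nothing to the first variation because, using antisymmetry of $A$ and the cyclic property of the trace, $\langle A, [A,U]\rangle = \mathrm{tr}(A^T[A,U]) \equiv 0$ pointwise. Stationarity therefore reduces to $\int_{B_1}\langle A, \nabla U\rangle = 0$ for all $U$, which after integration by parts is equivalent to $\mathrm{div}\,A = 0$ in $B_1$ together with the natural boundary condition $A\cdot \nu = 0$ on $\partial B_1$. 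Since $A$ is a divergence-free $so(n)$-valued $1$-form on the simply connected disk with zero normal trace, the Poincar\'e lemma supplies $\xi \in W^{1,2}(B_1, so(n))$ with $\nabla^\perp \xi = A$; the condition $A\cdot \nu = 0$ translates into $\partial_\tau \xi = 0$ on $\partial B_1$, so $\xi$ is constant on the boundary and may be normalized to vanish there. The pair $(P_*, \xi)$ then satisfies \eqref{P-1} and \eqref{P-2}.

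The main technical subtlety, and where I expect the bulk of the work, is justifying the direct method in the manifold-valued Sobolev space $W^{1,2}(B_1, SO(n))$: one must verify that $\mathcal{F}$ is weakly lower semicontinuous along a minimizing sequence $P_k$, which requires combining weak $L^2$-convergence of $\nabla P_k$ with strong $L^p$-convergence of $P_k$ (Rellich--Kondrachov, valid for all $p<\infty$ in dimension two) to pass to the limit in the cross terms of $|P_k^T\nabla P_k + P_k^T\Omega P_k|^2$. The smallness hypothesis $\|\Omega\|_{L^2} \leq \varepsilon$ does not enter the existence argument itself but is essential to guarantee that $P_*$ remains $W^{1,2}$-close to a constant orthogonal matrix, which is precisely what makes the construction useful as a perturbation of the trivial gauge in the analysis of \eqref{CIPDE1} downstream.
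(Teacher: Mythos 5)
Your proof is correct, and it takes a genuinely different route from the one the paper invokes. The theorem is cited from Rivi\`ere \cite[Lemma A.3]{Riv1}, which constructs $P$ by Uhlenbeck's continuity method: a perturbative argument along the family $\Omega_t = t\Omega$ with a priori $W^{2,2}$ estimates on $\xi$ that close only under the smallness hypothesis on $\|\Omega\|_{L^2}$. What you describe instead is the direct minimization of $\mathcal{F}(R)=\tfrac12\int_{B_1}|R^T\nabla R + R^T\Omega R|^2$ over $W^{1,2}(B_1, SO(n))$ --- exactly the alternative the paper flags in Remark~\ref{PMATRIXX}, following Chone \cite{Cho} and Schikorra \cite{Sc}. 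Your key steps are all sound: coercivity from $|R^T\nabla R|=|\nabla R|$ and $|R^T\Omega R|=|\Omega|$; Rellich compactness plus a.e.\ closedness of $SO(n)$ to produce a minimizer, with weak lower semicontinuity coming from weak $L^2$-convergence of $\nabla P_k$ paired against strong $L^2$-convergence of $\Omega P_k$ (dominated convergence, since $|P_k|$ is uniformly bounded); the comparison with $R\equiv I$ giving $\|A\|_{L^2}\le\|\Omega\|_{L^2}$ and hence \eqref{P-2} with $C=3$; the algebraic identity $\langle A,[A,U]\rangle\equiv 0$ from antisymmetry of $A$ and cyclicity of the trace; and the Hodge/Poincar\'e lemma on the disk to invert $\nabla^\perp$ with homogeneous Dirichlet data forced by the natural boundary condition $A\cdot\nu=0$. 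Your closing observation that no smallness of $\|\Omega\|_{L^2}$ is needed for the existence and the estimate is a genuine advantage of this variational route over Uhlenbeck's (smallness only enters downstream when one feeds the gauge back into \eqref{CIPDE1}). One technical point to tidy up: when deriving the Euler--Lagrange equation by varying $P_s = P_*e^{sU}$, take $U\in C^\infty(\overline{B_1},so(n))$ rather than merely $U\in W^{1,2}$, since in dimension two $W^{1,2}$ does not embed into $L^\infty$ and $P_*e^{sU}$ is then not obviously an admissible competitor in $W^{1,2}(B_1,SO(n))$; smooth $U$ with nonvanishing boundary trace are dense enough to recover both $\operatorname{div} A=0$ in $B_1$ and $A\cdot\nu=0$ on $\partial B_1$.
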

\begin{remark}Multiplying both sides of equation \eqref{P-1} by $P$ from the left gives that (with indices and $1\leq m, z\leq n$)
\begin{equation} \label{P-4}
\nabla P^i_j\,=\,P^i_m\nabla^{\perp} \xi^m_j - \Omega^i_z \,P^z_j,\quad  1\leq i,j \leq n\,.
\end{equation}
\end{remark}
\begin{remark}\label{PMATRIXX}
Besides Uhlenbeck's method there is another way to construct the gauge tranformation matrix $P$, namely one can minimize the energy functional
\begin{equation}
E(R) \,=\, \int_{B_1} \left|R^T\nabla R + R^T\Omega R\right|^2
\end{equation}
among all $R\in W^{1,2}(B_1,SO(n))$, see e.g. \cite{Cho} and \cite{Sc}.
\end{remark}

Another key result from Rivi\`ere's work is the following theorem, which was proved based on Theorem \ref{PCloseTOID}.

\begin{theorem}\label{PCloseTOIDllz}(\cite[Theorem I.4]{Riv1}) There exist $\varepsilon>0$ and $C>0$ such that for every $\Omega$ in $L^2(B_1, so(n)\otimes\wedge^1\mathbb{R}^2)$ satisfying
$$
\int_{B_1}|\Omega|^2\,\leq\, \varepsilon\,,
$$
there exist $\widehat{A} \in W^{1,2}\,\cap\, C^{0}(B_1, Gl_n(\mathbb{R}))$, $A = (\widehat{A}+ Id)\,P^{T} \in L^{\infty}\,\cap\, W^{1,2}(B_1, Gl_n(\mathbb{R}))$ and $B\in W_0^{1,2}(B_1, M_n(\mathbb{R}))$ such that
\begin{equation}\label{A-B-1}
\nabla A - A \Omega\,=\, \nabla^{\perp} B
\end{equation}
and
\begin{equation}\label{A-B-2}
\|\widehat{A}\|_{W^{1,2}(B_1)} + \|\widehat{A}\|_{L^\infty(B_1)} + \|B\|_{W^{1,2}(B_1)}\,\leq\, C \|\Omega\|_{L^2(B_1)}\,.
\end{equation}
\end{theorem}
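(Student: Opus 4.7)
The plan is to exploit the Coulomb-like gauge $(P,\xi)$ supplied by Theorem~\ref{PCloseTOID} to write $A$ as a perturbation $(\widehat{A}+Id)P^{T}$ of $P^{T}$, reduce the existence of $A$ and $B$ to a coupled elliptic system for $(\widehat{A},B)$ with Jacobian-type right-hand side, and solve that system by a Wente-based fixed-point iteration. As a preliminary I would derive from \eqref{P-4} together with the antisymmetries of $\xi$ and $\Omega$ the basic identity $\nabla P^{T}-P^{T}\Omega=-\nabla^{\perp}\xi\cdot P^{T}$. Substituting $A=(\widehat{A}+Id)P^{T}$ into $\nabla A-A\Omega=\nabla^{\perp}B$ and multiplying by $P$ on the right then reduces the problem to the first-order system
\[
\nabla\widehat{A}=(\widehat{A}+Id)\nabla^{\perp}\xi+\nabla^{\perp}B\cdot P.
\]
Taking divergence of both sides (using $\dv\nabla^{\perp}=0$ and $\nabla Id=0$) yields the clean elliptic equation $\Delta\widehat{A}=\nabla\widehat{A}\cdot\nabla^{\perp}\xi+\nabla^{\perp}B\cdot\nabla P$, while taking the orthogonal curl produces a companion elliptic equation for $B$ whose right-hand side is again Jacobian-type plus a forcing term built from $\Delta\xi=\nabla^{\perp}\cdot(P^{T}\nabla P+P^{T}\Omega P)$. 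The natural boundary conditions are $B=0$ on $\partial B_{1}$ together with the Neumann condition $\partial_{\nu}\widehat{A}=0$ on $\partial B_{1}$, which is automatic because $\xi=0$ on $\partial B_{1}$ forces $\nabla^{\perp}\xi\cdot\nu=-\partial_{\tau}\xi=0$ there.

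Next I would solve this coupled system by a Banach contraction on the space $\mathcal{X}$ equipped with the norm $\|\widehat{A}\|_{L^{\infty}}+\|\nabla\widehat{A}\|_{L^{2}}+\|\nabla B\|_{L^{2}}$: at each step freeze the previous iterate on the right-hand side and solve the two linear problems. The crucial analytic tool is Wente's inequality, which supplies $\|u\|_{L^{\infty}}+\|\nabla u\|_{L^{2}}\leq C\|\nabla f\|_{L^{2}}\|\nabla g\|_{L^{2}}$ whenever $\Delta u=\nabla f\cdot\nabla^{\perp}g$ with appropriate boundary data. It gives a bound of the form
\[
\|\widehat{A}_{k+1}\|_{\mathcal{X}}+\|\nabla B_{k+1}\|_{L^{2}}\leq C\|\Omega\|_{L^{2}}+C\bigl(\|\nabla\xi\|_{L^{2}}+\|\nabla P\|_{L^{2}}\bigr)\bigl(\|\widehat{A}_{k}\|_{\mathcal{X}}+\|\nabla B_{k}\|_{L^{2}}\bigr),
\]
and since \eqref{P-2} yields $\|\nabla\xi\|_{L^{2}}+\|\nabla P\|_{L^{2}}\leq C\|\Omega\|_{L^{2}}$, choosing the smallness parameter small enough makes the iteration a strict contraction on a small closed ball. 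The resulting fixed point then obeys $\|\widehat{A}\|_{L^{\infty}}+\|\widehat{A}\|_{W^{1,2}}+\|B\|_{W^{1,2}}\leq C\|\Omega\|_{L^{2}}$; continuity of $\widehat{A}$ is built into the conclusion of Wente's lemma; and for $\|\Omega\|_{L^{2}}$ small enough $\|\widehat{A}\|_{L^{\infty}}<1$, so that $\widehat{A}+Id$ is pointwise invertible and $A=(\widehat{A}+Id)P^{T}$ lies in $L^{\infty}\cap W^{1,2}(B_{1},Gl_{n}(\mathbb{R}))$ with values in $Gl_{n}$.

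The main obstacle is the $L^{\infty}$ control of $\widehat{A}$: a priori the right-hand side $\nabla\widehat{A}\cdot\nabla^{\perp}\xi+\nabla^{\perp}B\cdot\nabla P$ of its elliptic equation is only in $L^{1}(B_{1})$, and in two dimensions $L^{1}$ data does not give $L^{\infty}$ solutions via standard Calder\'on--Zygmund theory. It is precisely the antisymmetry of $\Omega$, captured by the gauge $(P,\xi)$ of Theorem~\ref{PCloseTOID} and reappearing as the Jacobian null-Lagrangian structure on the right-hand side above, that brings Wente's inequality into play and lets the entire scheme close; everything else is standard two-dimensional Hodge decomposition and fixed-point machinery.
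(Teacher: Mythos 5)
The paper does not actually prove this theorem; it is imported verbatim from Rivi\`ere \cite[Theorem I.4]{Riv1}, so there is no ``paper's own proof'' to compare against. Your high-level strategy — write $A=(\widehat{A}+Id)P^{T}$, use the Coulomb gauge of Theorem \ref{PCloseTOID}, reduce to a coupled elliptic system, and close via Wente's lemma and a Banach fixed point — is indeed the route Rivi\`ere follows, and your reduction to the first-order system $\nabla\widehat{A}=(\widehat{A}+Id)\nabla^{\perp}\xi+\nabla^{\perp}B\cdot P$ and the divergence equation $\Delta\widehat{A}=\nabla\widehat{A}\cdot\nabla^{\perp}\xi+\nabla^{\perp}B\cdot\nabla P$ are both correct.

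However, there is a genuine gap in the $B$-estimate. You claim that taking the curl produces ``a companion elliptic equation for $B$ whose right-hand side is again Jacobian-type plus a forcing term built from $\Delta\xi$.'' Carrying out the computation — multiply the first-order system by $P^{T}$ on the right to get $\nabla^{\perp}B=\nabla\widehat{A}\cdot P^{T}-(\widehat{A}+Id)\nabla^{\perp}\xi P^{T}$, then take curl — one finds
\begin{equation*}
\Delta B \,=\, \nabla\widehat{A}\cdot\nabla^{\perp}P^{T} \,-\, \bigl(\nabla\widehat{A}\cdot\nabla\xi\bigr)P^{T} \,-\, (\widehat{A}+Id)\,\bigl(\Delta\xi\,P^{T}+\nabla\xi\cdot\nabla P^{T}\bigr)\,.
\end{equation*}
The first term is a Jacobian and is fine. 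But $\nabla\widehat{A}\cdot\nabla\xi$ is an ordinary dot product of gradients, not a wedge: it has no compensation structure, it depends on the unknown $\widehat{A}$ (so it is not a ``forcing term''), and it lies only in $L^{1}(B_{1})$. In two dimensions, an $L^{1}$ right-hand side does \emph{not} produce a $W^{1,2}_{0}$ solution of the Dirichlet problem (only $W^{1,p}$ for $p<2$), so Wente's lemma and standard elliptic theory together do not yield the bound $\|\nabla B_{k+1}\|_{L^{2}}\leq C\|\Omega\|_{L^{2}}+C(\|\nabla\xi\|_{L^{2}}+\|\nabla P\|_{L^{2}})(\cdots)$ that you invoke to close the contraction. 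The same issue afflicts $\nabla\xi\cdot\nabla P^{T}$, and the ``forcing'' piece $\Delta\xi=\nabla^{\perp}P^{T}\cdot\nabla P+\operatorname{curl}(P^{T}\Omega P)$ is itself not Jacobian: the second summand is only a distribution in $W^{-1,2}$. Rivi\`ere avoids this by not deriving a scalar elliptic equation for $B$ at all; instead he runs the fixed point through a Hodge decomposition of the 1-form $\nabla\widehat{A}-(\widehat{A}+Id)\nabla^{\perp}\xi$ (into $\nabla^{\perp}B+\nabla C$ with $B|_{\partial B_{1}}=0$), obtaining the $W^{1,2}$-bound on $B$ directly from the $L^{2}$-bound on that 1-form rather than from Wente, and arranging the iteration so that the exact-form part $C$ vanishes in the limit. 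Your sketch omits this essential device, and as written the $B$-step of your iteration does not close.
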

\begin{remark}
Combining \eqref{A-B-1} with \eqref{CIPDE1} one obtains the conservation law (in the distribution sense) for Rivi\`ere's equation \eqref{CIPDE1}:
 \begin{equation}\label{A-B-3}
    \text{div }(A\nabla u+B\nabla^{\perp}u)\,=\,0\,.
   \end{equation}
\end{remark}

Equation \eqref{CIPDE1}, first considered in such generality by Rivi\`ere in \cite{Riv1}, generalizes a number of interesting equations appearing naturally in geometry, including the harmonic map equation \eqref{HM}), the $H$-surface equation and, more generally, the Euler-Lagrange equation of any conformally invariant elliptic Lagrangian which is quadratic in the gradient. We remark that the harmonic map equation \eqref{HM} can be written in the form of \eqref{CIPDE1} if we set
\begin{equation}\label{SFF}
\Omega:=(\Omega_j^i)_{1\leq i,j\leq n}\quad \text{where } \Omega^i_j:=\,[A^i(u)_{j,l}-A^j(u)_{i,l}]\nabla u^l\,.
\end{equation}
A central issue is the regularity of the weak solution $u$ to this system of equations \eqref{CIPDE1}. Based on the conservation law \eqref{A-B-3}, Rivi\`ere proved the (interior) continuity of any $W^{1,2}$ weak solution $u$ to \eqref{CIPDE1}. This also resolved two conjectures by Heinz and Hildebrandt respectively, see \cite{Riv1}. We point out that the harmonic map heat flow \eqref{HHF} on $B_1$ can be written in the form
\begin{equation}\label{HHFNEW}
u_t-\Delta u =  \Omega\cdot \nabla u \quad \text{on } B_1\times (0,T)\,,
\end{equation}
where $\Omega$ is as in \eqref{SFF}.

The deep reason for Rivi\`ere's argument to work is that once the conservation law \eqref{A-B-3} is established, then equation \eqref{CIPDE1} can be rewritten in the form
$$
\text{div}(A\nabla u)\,=\,\nabla^\perp B \cdot\nabla u\,,
$$
The right hand side of this new equation lies in the Hardy space $\mathcal{H}^1$ by a result of Coifman, Lions, Meyer and Semmes \cite{CLMS}. Moreover, using a Hodge decomposition argument, one can show that $u$ lies locally in $W^{2,1}$ which embeds into $C^0$ in two dimensions, cf. the proof of Theorem \ref{Regularity} below. The key to this fact is a special ``compensation phenomena'' for Jacobian determinants, which was first observed by Wente in \cite{W}. We will refer it to the following Wente's lemma, for which an elementary proof can be found in \cite{BC1} and \cite[Theorem 3.1.2]{He1}. It will also be the key ingredient of our proof in this paper.

\begin{lemma}[Wente's lemma \cite{W}] \label{Wen} If $a,b\in W^{1,2}(B_1,\mathbb{R})$ and $w$ be the solution of
\begin{equation}
\left\{
\begin{aligned}
\Delta w\, &=\,\frac{\partial a}{\partial y}\frac{\partial b}{\partial x} - \frac{\partial a}{\partial x}\frac{\partial b}{\partial y} \,=\, \nabla a \cdot \nabla^{\perp} b && \text{in } \, B_1\,,\\
w\, &=\,0 \quad \text{or} \quad \frac{\partial w}{\partial \nu}\,=\,0 && \text{on } \, \partial B_1\,.
\end{aligned}
\right.
\end{equation}
Then $w\in C^0\cap W^{1,2}(B_1,\mathbb{R})$ and the following estimate holds
\begin{equation}
\|w\|_{L^\infty(B_1)}+ \|\nabla w\|_{L^2(B_1)}\,\leq\,C\|\nabla a\|_{L^2(B_1)}\|\nabla b\|_{L^2(B_1)}\,,
\end{equation}
where we choose $\int_{B_1} w = 0 $ for the Neumann boundary data.
\end{lemma}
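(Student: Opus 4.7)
\medskip

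\noindent\textbf{Proof proposal.} The plan is to follow the classical argument of Brezis--Coron, exploiting two features of the right-hand side: the Jacobian/null-form structure
$$\nabla a \cdot \nabla^{\perp} b \,=\, \partial_y a \,\partial_x b - \partial_x a \,\partial_y b \,=\, \on{div}(a\, \nabla^{\perp} b),$$
which is valid because $\nabla^{\perp} b$ is divergence-free in two dimensions, and its companion $\nabla a \cdot \nabla^{\perp} b = -\on{div}(b\,\nabla^{\perp} a)$. By density I may first reduce to the case $a,b \in C^{\infty}(\overline{B_1})$, prove the estimates in that regularity, and then pass to the limit.

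For the $W^{1,2}$ bound in the Dirichlet case I would multiply $\Delta w = \on{div}(a\nabla^{\perp} b)$ by $w$, integrate by parts once (the boundary term vanishes since $w=0$ on $\partial B_1$), and obtain
$$\int_{B_1} |\nabla w|^2 \,=\, -\int_{B_1} a\, \nabla w\cdot \nabla^{\perp} b \,=\, -\int_{B_1} a\, \nabla^{\perp} b\cdot \nabla w.$$
A second integration by parts, using that $\nabla^{\perp} w$ is tangential on $\partial B_1$ (another consequence of $w|_{\partial B_1}=0$), symmetrizes the right-hand side so that one may split the derivatives and apply H\"older together with the Sobolev embedding $W^{1,2}\hookrightarrow \text{BMO}$ and the duality $\mathcal{H}^1$--$\text{BMO}$. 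The upshot is $\|\nabla w\|_{L^2} \leq C\|\nabla a\|_{L^2}\|\nabla b\|_{L^2}$. In the Neumann case the same manipulation works after fixing the zero-mean normalization, since the boundary terms again drop by $\nabla w\cdot\nu=0$.

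For the $L^{\infty}$ bound I would first extend $a$ and $b$ to $\R^2$ with control on the Dirichlet energy, and then study the Newton potential
$$\tilde w(x) \,=\, \int_{\R^2} \Gamma(x-y)\,\nabla a(y)\cdot \nabla^{\perp} b(y)\,dy,$$
where $\Gamma$ is the fundamental solution of $\Delta$ in $\R^2$. Integrating by parts once in $y$ (which is justified for smooth compactly-supported data) transfers a derivative to $\Gamma$ and yields a representation
$$\tilde w(x) \,=\, \int_{\R^2} a(y)\, \nabla^{\perp}_y\Gamma(x-y)\cdot \nabla b(y)\,dy,$$
whose kernel $\nabla \Gamma$ is borderline singular. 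The Cauchy--Schwarz inequality then bounds $|\tilde w(x)|$ by $\|\nabla b\|_{L^2}$ times an $L^2$-norm of $a(y)\nabla\Gamma(x-y)$; the latter is controlled via a Fubini/rearrangement argument combined with the Poincar\'e inequality for $a$ and the fact that $|\nabla\Gamma(x-\cdot)|^2$ sits on the boundary of $L^1$, precisely the compensation that produces the $L^{\infty}$ estimate. The solution $w$ itself differs from $\tilde w$ only by a bounded harmonic function whose boundary values equal $-\tilde w|_{\partial B_1}$, so the maximum principle transfers the estimate to $w$.

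The main obstacle is the $L^{\infty}$ estimate: since $\nabla a\cdot\nabla^{\perp} b$ lies a priori only in $L^1(B_1)$, standard Calder\'on--Zygmund theory gives $w$ in every $L^q$ with $q<\infty$ but not in $L^{\infty}$. The saving feature is entirely the algebraic cancellation in the Jacobian, which (as later codified by Coifman--Lions--Meyer--Semmes) promotes the product to the Hardy space $\mathcal{H}^1$ and thereby to $W^{2,1}\hookrightarrow C^0$ in two dimensions. Handling this cancellation directly (as opposed to invoking $\mathcal{H}^1$-theory) is the technical heart of the proof and what I would invest the most care in.
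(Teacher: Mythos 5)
The paper does not prove Wente's lemma: it states it and refers to Wente \cite{W}, Brezis--Coron \cite{BC1} and H\'elein \cite[Theorem~3.1.2]{He1} for an elementary proof. Your outline is the Brezis--Coron route, so I will assess it on its own terms.

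For the $W^{1,2}$ bound you announce an intent to bypass $\mathcal{H}^1$-theory, yet you close the identity $\int_{B_1}|\nabla w|^2 = -\int_{B_1} w\,\nabla a\cdot\nabla^\perp b$ via $W^{1,2}\hookrightarrow \text{BMO}$ and $\mathcal{H}^1$--$\text{BMO}$ duality, i.e.\ via the very CLMS machinery you set out to avoid. The elementary route is to prove the $L^\infty$ bound first; then H\"older applied to the same identity gives $\|\nabla w\|_{L^2}^2\leq\|w\|_{L^\infty}\|\nabla a\|_{L^2}\|\nabla b\|_{L^2}$ immediately, no symmetrization or duality needed.

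The genuine gap is in the $L^\infty$ estimate. After integrating by parts you arrive at $\tilde w(x)=\int a(y)\,\nabla^\perp_y\Gamma(x-y)\cdot\nabla b(y)\,dy$ and propose a direct Cauchy--Schwarz against $\bigl(\int a(y)^2|\nabla\Gamma(x-y)|^2\,dy\bigr)^{1/2}$. That quantity is infinite whenever $a$ does not vanish near $x$: since $|\nabla\Gamma(x-y)|^2\sim|x-y|^{-2}$ is exactly non-integrable in the plane, the integral diverges logarithmically at $y=x$, and appealing to Poincar\'e ``after the fact'' cannot repair a divergent integral. Subtracting the single constant $a(x)$ does not rescue it either ($a(x)$ is not pointwise defined for $W^{1,2}$ data, and for smooth data the bound then involves $\|\nabla a\|_{L^\infty}$, not $\|\nabla a\|_{L^2}$). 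What actually makes the Brezis--Coron argument close is a finer, shell-by-shell cancellation: write the Jacobian in polar coordinates about $x$ as $\tfrac{1}{r}(\partial_r a\,\partial_\theta b-\partial_\theta a\,\partial_r b)$, observe that $\int_0^{2\pi}(\partial_r a\,\partial_\theta b-\partial_\theta a\,\partial_r b)\,d\theta=\partial_r\!\int_0^{2\pi}a\,\partial_\theta b\,d\theta$, integrate by parts in $r$ against $\log r$, and \emph{only then}, using $\int_0^{2\pi}\partial_\theta b\,d\theta=0$, replace $a$ on each circle by $a-\bar a(r)$ (the angular mean at radius $r$) and apply the one-dimensional Poincar\'e inequality on that circle before Cauchy--Schwarz in $\theta$ and then in $r$. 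This radius-by-radius subtraction of the angular average is the compensation mechanism; it is precisely the step your sketch elides, and without it the estimate as written does not close.
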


Now let $u(x,t) \in W^{1,2}(B_1\times[0,\infty),\mathcal{N})$ be a global weak solution to the harmonic map heat flow \eqref{HHF} with $E(u_0) < \varepsilon_0$, $E(u(\cdot, t))$ is non-increasing in $t$ and $u(\cdot, t)|_{\partial B_1} = \chi$ for all $t\geq 0$ as in Theorem \ref{MainThm}. First note that for a.e. $t_0\in (0,\infty)$ we have $u_t(t_0)\in L^2(B_1)$. Then for any fixed $t_0$ such that $u_t(t_0) \in L^2(B_1)$, as in \eqref{SFF}, we have
$$
\Omega(t_0) =(\Omega_j^i(t_0))_{1\leq i,j\leq n}\,\, \text{ where } \,\, \Omega^i_j(t_0)=\,[A^i(u(t_0))_{j,l}-A^j(u(t_0))_{i,l}]\nabla u^l(t_0).
$$
We will abbreviate $\Omega(t_0)$ to $A(u(t_0))\nabla u(t_0)$. Moreover,
\begin{equation}
\int_{B_1}|\Omega(t_0)|^2 \,\leq\, C E(u(t_0)) \,\leq\, C\varepsilon_0\,.
\end{equation}
Therefore Rivi\`ere's Theorems \ref{PCloseTOID} and \ref{PCloseTOIDllz} on the existence of gauge apply to this time $t_0$ slice, and we find the existence of matrices $P(t_0)\in W^{1,2}(B_1, SO(n))$,
$$A(t_0)  = (\widehat{A}(t_0) + Id)\,P^{T}(t_0) \,\,\in\,\, L^{\infty}\,\cap\, W^{1,2}(B_1, Gl_n(\mathbb{R}))$$
and
$$B(t_0)\,\,\in\,\, W_0^{1,2}(B_1, M_n(\mathbb{R}))$$ such that
\begin{equation}\label{A-B-1-1}
\nabla A(t_0) - A(t_0) \Omega(t_0)\,=\, \nabla^{\perp} B(t_0)
\end{equation}
with the corresponding estimates \eqref{P-2} and \eqref{A-B-2}.

Combining \eqref{A-B-1-1} with the harmonic map heat flow equation \eqref{HHFNEW} yields (omitting the index $t_0$)
\begin{align}\label{A-B-3-3}
\text{div }(A \nabla u + B \nabla^{\bot} u)\,&=\,\nabla A \cdot \nabla u + A \Delta u + \nabla B \cdot \nabla^{\bot} u \notag\\
             &= \nabla A\cdot \nabla u +A(-\Omega\cdot \nabla u +u_t)+\nabla B\cdot \nabla^{\bot} u\notag\\
             &= \nabla A\cdot \nabla u + (\nabla^{\bot} B-\nabla A) \cdot \nabla u +Au_t +\nabla B\cdot \nabla^{\bot} u\\
             &= Au_t \,.\notag
 \end{align}
 We refer to \eqref{A-B-3-3} as an ``almost'' conservation law. By the results of \cite{CLMS} and the standard $L^p$ theory, \eqref{A-B-3-3} readily implies that $u(t_0) \in C^0(B_1, \mathbb{R}^n)$. In fact, we have the following $\varepsilon$-regularity theorem.
 \begin{theorem}\label{Regularity}
There exist $\varepsilon_0 >0$ depending only on $\mathcal{N}$ such that if $u \in W^{1,2}(B_1\times[0,\infty),\mathcal{N})$ is a global weak solution to the harmonic map heat flow \eqref{HHF} with $E(u_0) < \varepsilon_0$, $E(u(\cdot, t))$ is non-increasing in $t$ and $u(\cdot, t)|_{\partial B_1} = \chi$ for all $t\geq 0$, then $u \in C^\infty(B_1\times [1,\infty), \mathcal{N})$.
\end{theorem}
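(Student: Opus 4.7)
My plan is to establish the $\varepsilon$-regularity in two stages: first, show spatial H\"older continuity of $u(\cdot, t_0)$ at almost every time slice $t_0$ using Rivi\`ere's gauge together with the almost conservation law; second, bootstrap to $C^\infty$ using standard parabolic theory.

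By the energy identity \eqref{EnergyDrease}, $\int_0^\infty \int_{B_1} |u_t|^2 < \infty$, so $u_t(\cdot,t_0) \in L^2(B_1)$ for a.e. $t_0 > 0$. Fix such a $t_0 \in [1/2, 1]$. Applying Theorems \ref{PCloseTOID} and \ref{PCloseTOIDllz} at this time slice produces matrices $A, B$ (with norms controlled by $C\sqrt{\varepsilon_0}$) and the almost conservation law \eqref{A-B-3-3}:
\[
\dv(A\nabla u + B\nabla^\perp u) = A u_t \in L^2(B_1).
\]
On an interior ball $B_r \subset\subset B_1$, I would Hodge-decompose $V := A\nabla u + B\nabla^\perp u = \nabla D_1 + \nabla^\perp D_2$ by solving $\Delta D_1 = \dv V$ with zero boundary data and setting $\nabla^\perp D_2 := V - \nabla D_1$ (divergence-free, hence locally the orthogonal gradient of some $D_2 \in W^{1,2}$). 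Then $\Delta D_1 = A u_t \in L^2$ gives $D_1 \in W^{2,2}$ by Calder\'on--Zygmund. For $D_2$, one has $\Delta D_2 = \on{curl} V$; using Rivi\`ere's identity \eqref{A-B-1} together with the flow equation, I would rewrite $\on{curl} V$ as a sum of Jacobian determinants $\partial_x f\,\partial_y g - \partial_y f\,\partial_x g$ with $f, g \in W^{1,2}$, plus an explicit $L^2$ remainder involving $u_t$. By Coifman--Lions--Meyer--Semmes \cite{CLMS}, the Jacobian pieces lie in $\mathcal{H}^1$, so Theorem \ref{CDSTHM} yields $D_2 \in L^\infty \cap W^{1,2}$ with norm $\leq C\varepsilon_0$.

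Since $\widehat A + \on{Id}$ is a small $L^\infty$ perturbation of the identity and $P \in SO(n)$, the matrix $A$ is invertible with $A^{-1} \in L^\infty$, so
\[
\nabla u = A^{-1}\bigl(\nabla D_1 + \nabla^\perp D_2 - B\nabla^\perp u\bigr).
\]
A Morrey-type iteration on this identity (exploiting $B \in L^q$ for every $q < \infty$ via the $W^{1,2}$ bound) upgrades $\nabla u(\cdot, t_0) \in L^p_{\on{loc}}(B_1)$ for some $p > 2$, hence $u(\cdot, t_0) \in C^{0,\alpha}_{\on{loc}}(B_1)$ by Morrey embedding. Once spatial H\"older continuity holds at one good slice $t_0 \in [1/2, 1]$, I would invoke classical parabolic Schauder theory for the semilinear heat equation $u_t - \Delta u = A(u)(\nabla u, \nabla u)$ with smooth target $\mathcal{N}$ to bootstrap to $u \in C^\infty(B_1 \times (t_0, \infty), \mathcal{N})$, which in particular covers $B_1 \times [1, \infty)$.

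The main obstacle will be the careful Jacobian identification for $\on{curl} V$: one must cancel the non-Jacobian contributions from $\nabla A \cdot \nabla u$ against the term $B \Delta u$ (arising from $\on{curl}(B\nabla^\perp u)$) by substituting both \eqref{A-B-1} and the flow equation, isolating a clean $\mathcal{H}^1$-part plus a benign $L^2$-remainder involving $u_t$. A secondary technical point is arranging the Hodge decomposition on $B_r$ with boundary conditions that yield uniform $L^2$ control on $D_1, D_2$ so that the iteration for the $L^p$-gain closes.
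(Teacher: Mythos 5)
Your high-level plan (fix a good time slice, invoke Rivi\`ere's gauge, Hodge decompose, feed the Jacobian pieces into CLMS/Wente, and then bootstrap in time) is the same plan the paper follows, but your choice of Hodge decomposition creates a genuine problem that the stated ``cancellation'' does not resolve. You propose to decompose $V := A\nabla u + B\nabla^\perp u$. While this makes $\operatorname{div} V = Au_t$ purely $L^2$, the price is that
\[
\operatorname{curl} V \;=\; \operatorname{curl}(A\nabla u) + \operatorname{curl}(B\nabla^\perp u)
\;=\; \nabla^\perp A\cdot\nabla u \;+\; \nabla B\cdot\nabla u \;+\; B\,\Delta u\,,
\]
and only the first summand is a Jacobian. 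The second, $\nabla B\cdot\nabla u$, is a genuine dot product (not a determinant) and lies merely in $L^1$ with no Hardy-space gain; the third, $B\,\Delta u$, cannot be absorbed because $\Delta u$ is not yet known to be better than a distribution. Substituting the flow equation turns $B\Delta u$ into $Bu_t - B\,\Omega\cdot\nabla u$, of which $Bu_t\in L^2$ is fine, but $B\,\Omega\cdot\nabla u$ is again quadratic in $\nabla u$ with no Jacobian structure, and neither \eqref{A-B-1} (which governs $\nabla A - A\Omega$, not $\nabla B - B\Omega$) nor any other identity at hand produces the cancellation you describe. Note also that you name the term to be cancelled as ``$\nabla A\cdot\nabla u$,'' which does not actually appear: $\operatorname{curl}(A\nabla u)$ is already the Jacobian $\nabla^\perp A\cdot\nabla u$. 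So the obstacle you flag is real, but the route you sketch around it does not close.

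The paper avoids this entirely by Hodge decomposing only $A\nabla u = \nabla D + \nabla^\perp E$. Then the almost conservation law \eqref{A-B-3-3} gives $\operatorname{div}(A\nabla u) = -\nabla B\cdot\nabla^\perp u + Au_t$ (Jacobian plus $L^2$) and a direct computation gives $\operatorname{curl}(A\nabla u) = \nabla^\perp A\cdot\nabla u$ (pure Jacobian). Both $\Delta D$ and $\Delta E$ are therefore Jacobian-plus-$L^2$, and CLMS together with Calder\'on--Zygmund theory on Hardy spaces immediately yields $D, E\in W^{2,1}_{\on{loc}}$, hence $A\nabla u\in W^{1,1}_{\on{loc}}$, hence $u\in W^{2,1}_{\on{loc}}\hookrightarrow C^0$. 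No $L^\infty$ estimate on the Hodge potentials is needed, and in particular one never invokes Theorem \ref{CDSTHM}, whose hypothesis requires a non-negative right-hand side -- a hypothesis $\operatorname{curl} V$ does not satisfy, so your proposed use of that theorem for $D_2$ is a misapplication; Wente's lemma or the $\mathcal{H}^1$-based Calder\'on--Zygmund estimate is the correct tool for sign-indefinite Jacobians. Once $\Omega = A(u)\nabla u\in W^{1,1}_{\on{loc}}$ is in hand, the paper quotes \cite[Theorem III.4]{Riv4} to get $\nabla u(t_0)\in L^p_{\on{loc}}$ for some $p>2$ at a.e.\ slice, and then bootstraps parabolically. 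Your Morrey iteration and parabolic Schauder steps at the end are morally the same as the paper's final bootstrap, and that part of your plan is fine; the gap is the middle step, where $\operatorname{curl} V$ fails to have the Jacobian structure you need. Switching the Hodge decomposition to $A\nabla u$ alone repairs this cleanly.
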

\begin{proof}
For any fixed $t_0$ such that $u_t(t_0) \in L^2(B_1)$, by Hodge decomposition (see e.g. \cite[Corollary 10.5.1]{IM}), there exist $D(t_0), E(t_0) \in W^{1,2}(B_1, \mathbb{R}^n)$ such that (omitting the index $t_0$)
 \begin{equation}\label{Regularity220101}
 A\nabla u = \nabla D + \nabla^\perp E\,.
 \end{equation}
Note that \eqref{A-B-3-3} implies
 \begin{equation}\label{Regularity22}
  \left\{
   \begin{aligned}
    \text{div }( A\nabla u )  &= -\nabla B \cdot\nabla^\perp u + Au_t \\
    \text{curl }( A\nabla u ) &= \nabla^\perp A \cdot\nabla u\,.
   \end{aligned}
 \right.
   \end{equation}
Combining \eqref{Regularity220101} and \eqref{Regularity22} we have
 \begin{equation}
  \left\{
   \begin{aligned}
    \Delta D &= -\nabla B \cdot\nabla^\perp u + Au_t \\
    \Delta E &= \nabla^\perp A \cdot\nabla u\,.
   \end{aligned}
 \right.
   \end{equation}
Then by the results of \cite{CLMS} and via an extension argument, using the fact that $Au_t(t_0)\in L^2(B_1)$, we get $A\nabla u(t_0) \in W^{1,1}_{loc}(B_1)$.  Therefore $u(t_0)\in W^{2,1}_{loc}(B_1)$ which embeds into $C^0(B_1)$.

Indeed, $A\nabla u(t_0) \in W^{1,1}_{loc}(B_1)$ implies immediately that
\begin{equation}\label{TEMP4}\Omega(t_0) = A(u(t_0))\nabla u(t_0)\,\,\in\,\, W^{1,1}_{loc}(B_1)\,.\end{equation} Then by \cite[Theorem III.4]{Riv4}, the flow equation \eqref{HHFNEW}, \eqref{TEMP4} and the fact that $u_t(t_0)\in L^2(B_1)$ yield that $\nabla u(t_0)\in L^p_{loc}(B_1)$ for some $p>2$. Note that this is valid for a.e. $t_0 \in (0,\infty)$. Then via a standard bootstrapping argument we have $\nabla u\in L^q_{loc}(B_1\times[1,T])$ for all $q>1$ and any $T>1$, see e.g. \cite{Li}, and all higher order interior regularity follows.
\end{proof}
\noindent Again, we see that the ``compensation phenomenon'' enjoyed by the special Jacobian structure (see Wente's lemma \ref{Wen}) has played an important role here, and these Wente type estimates have many interesting applications as observed already in e.g. \cite{W,BC1, BC2, Ta2, CLMS,He1,Riv1,Riv2,Riv3} and our recent work \cite{LL}.

\section{Improved estimates on the matrices $B$ and $P$}\label{Sec3}
Our main observation in this section are two hidden Jacobian structures for $\Delta B$ and $\Delta P$, which are valid only for $\Omega$ being in some special form, i.e., in our case we have $\Omega = A(u)\nabla u$\,. This observation allows us to gain improved global estimate on the matrix $B$ and improved local estimate on the matrix $P$.

\subsection{Improved global estimate on the matrix $B$}
We will first show an improved global estimate on the matrix $B$.
\begin{proposition}\label{XI}
Let $u(x,t)$ be as in Theorem \ref{MainThm}, then for any $t_0 \in [1,\infty)$ we have
\begin{equation}\label{A-B-4-4}
\|B(t_0)\|_{L^\infty(B_1)}\,\leq\, C \int_{B_1}|\nabla u(t_0)|^2\,\leq\,C \varepsilon_0\,.\end{equation}
\end{proposition}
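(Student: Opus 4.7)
The plan is to obtain a clean Jacobian representation of the form
\[
\Delta B^i_j \,=\, -\sum_{k,l} J\bigl(\beta^k_{j,l}(u)\,A^i_k,\; u^l\bigr),
\]
where $J(f,g):=\partial_1 f\,\partial_2 g - \partial_2 f\,\partial_1 g$ and $\beta^k_{j,l}(u):=A^k(u)_{j,l}-A^j(u)_{k,l}$, and then to conclude by Wente's Lemma \ref{Wen}, which applies since $B\in W^{1,2}_0(B_1,M_n(\mathbb{R}))$. Throughout I fix $t_0 \geq 1$ and suppress it from the notation; Theorem \ref{Regularity} guarantees $u(\cdot,t_0)$ is smooth so the pointwise calculus below is rigorous. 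Taking the two-dimensional curl of \eqref{A-B-1-1} and using $\text{curl}(\nabla A)=0$ and $\text{curl}(\nabla^{\perp} B)=\Delta B$, I obtain $\Delta B=-\text{curl}(A\Omega)$.

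To exploit the specific form $\Omega^k_j = \beta^k_{j,l}(u)\,\nabla u^l$ (see \eqref{SFF}), I apply Leibniz to $\text{curl}(A\Omega)^i_j$; the mixed second partials of $u$ cancel and a short computation gives
\[
\text{curl}(A\Omega)^i_j \,=\, \beta^k_{j,l}(u)\,J(A^i_k,u^l) \,+\, A^i_k\,\beta^k_{j,l,m}(u)\,J(u^m,u^l),
\]
with $\beta^k_{j,l,m}(u):=\partial_{u^m}\beta^k_{j,l}(u)$. Neither summand is individually a Jacobian of $W^{1,2}$ maps because of the $u$-dependent prefactor $\beta^k_{j,l}(u)$. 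The decisive algebraic step is now to apply the product-rule identity $f\,J(a,b)=J(fa,b)-a\,J(f,b)$ (with $f=\beta^k_{j,l}(u)$, $a=A^i_k$, $b=u^l$) together with the chain rule $J(\beta^k_{j,l}(u),u^l)=\beta^k_{j,l,m}(u)\,J(u^m,u^l)$; the two resulting contributions of $A^i_k\,\beta^k_{j,l,m}(u)\,J(u^m,u^l)$ then cancel exactly, delivering the displayed formula for $\Delta B^i_j$.

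To finish, I combine the bounds from Theorem \ref{PCloseTOIDllz}; since $A=(\widehat A+Id)P^T$ with $P\in SO(n)$, they give $\|A\|_{L^{\infty}}\le C$ and $\|\nabla A\|_{L^2}\le C\|\Omega\|_{L^2}\le C\|\nabla u\|_{L^2}$. Combined with the compactness of $\mathcal{N}$ (which yields $\|\beta(u)\|_{L^{\infty}}+\|\beta'(u)\|_{L^{\infty}}\le C$), the product rule gives $\|\nabla[\beta^k_{j,l}(u)\,A^i_k]\|_{L^2}\le C\|\nabla u\|_{L^2}$, so Wente's Lemma \ref{Wen} applied componentwise with zero Dirichlet data produces
\[
\|B^i_j\|_{L^{\infty}(B_1)}\,\le\, C\sum_{k,l}\|\nabla[\beta^k_{j,l}(u)A^i_k]\|_{L^2}\,\|\nabla u^l\|_{L^2}\,\le\, C\int_{B_1}|\nabla u(t_0)|^2\,\le\, C\varepsilon_0,
\]
as required. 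The main obstacle — and the step I expect to require the most care — is the exact cancellation in the middle paragraph: rewriting $\Delta B$ as a sum of genuine Jacobians of $W^{1,2}$ functions uses crucially that $\Omega=A(u)\nabla u$ has the specific harmonic-map structure, and the argument would break down for a generic antisymmetric $\Omega\in L^2$ satisfying only the hypotheses of Theorem \ref{PCloseTOIDllz}.
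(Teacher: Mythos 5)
Your proof is correct and takes essentially the same route as the paper: take the curl of \eqref{A-B-1-1}, exploit the fact that $\Omega=A(u)\nabla u$ to exhibit $\Delta B$ as a (sum of) Jacobians of $W^{1,2}$ functions, and invoke Wente's lemma together with the bounds \eqref{P-2}, \eqref{A-B-2}. One small remark: the Leibniz-then-recombine detour in your middle paragraph is unnecessary, since $(A\Omega)^i_j=\bigl(A^i_k\,\beta^k_{j,l}(u)\bigr)\nabla u^l$ is already of the form $\sum_l f_l\,\nabla u^l$ and $\operatorname{curl}(f_l\,\nabla u^l)=\nabla^\perp f_l\cdot\nabla u^l$ directly yields $\Delta B^i_j=-\nabla^\perp\bigl(A^i_k\,\beta^k_{j,l}(u)\bigr)\cdot\nabla u^l$, which is how the paper obtains \eqref{A-B-5-5} in one line.
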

\begin{proof}
We recall that $\Omega$ is given by $A(u)\nabla u$ as in \eqref{SFF} and therefore $\|\Omega(t_0)\|^2_{L^2(B_1)} \leq C\varepsilon_0$ for all $t_0\geq 1$. Now let $\varepsilon_0$ be so small that Theorems \ref{PCloseTOID} and \ref{PCloseTOIDllz} apply. Taking the curl on both sides of equation \eqref{A-B-1-1} yields
\begin{equation}\label{A-B-5-5}
\Delta B(t_0)\,=\, - \text{curl}(A(t_0) A(u(t_0))\nabla u (t_0)) \,=\,- \nabla u (t_0) \cdot \nabla^{\perp}(A(t_0) A(u(t_0)))\,.
\end{equation}
Combining the Jacobian structure of the right-hand side of \eqref{A-B-5-5} with the zero boundary condition of $B$ and estimates \eqref{P-2} and \eqref{A-B-2}, the Wente's lemma \ref{Wen} gives \eqref{A-B-4-4}. Here we have also used $E(u(t))<\varepsilon_0$ for all $t\geq 0$ , the smoothness and compactness of the target manifold $\mathcal{N}$.
\end{proof}

\subsection{Improved local estimate on the matrix $P$}\label{MatrixU}
We next show that $\Delta P$ also has a special Jacobian structure.
\begin{lemma}\label{P2}
Let $u(x,t)$ be as in Theorem \ref{MainThm}, then for any $t_0 \in [1,\infty)$ such that $u_t(\cdot, t_0)\in L^2(B_1)$, there exist $\xi(t_0)\in W_0^{1,2}(B_1, so(n))$, $\eta(t_0)\in W^{1,2}(B_1,\mathbb{R}^n)$, $\zeta(t_0)\in W_0^{2,2}(B_1,\mathbb{R}^n)$ and $Q_k(t_0), R_k(t_0) \in W^{1,2}\cap L^{\infty}(B_1, Gl_n(\mathbb{R}))$, $k=1,...,n$ with
\begin{align*}
\|\nabla \xi(t_0)\|_{L^{2}(B_1)} &+ \|\nabla \eta(t_0)\|_{L^{2}(B_1)} +  \|\nabla \zeta(t_0)\|_{L^{2}(B_1)} \\
&+\sum_k \left(\|\nabla Q_k(t_0)\|_{L^{2}(B_1)} + \|\nabla R_k(t_0)\|_{L^{2}(B_1)}\right) \leq C\sqrt{\varepsilon_0}
\end{align*}
and
\begin{equation}\label{TEMP}
 \| \zeta(t_0)\|_{W^{2,2}(B_1)} \leq C \|u_t(t_0)\|_{L^{2}(B_1)}
\end{equation}
such that
\begin{align}\label{P4}
\Delta P(t_0) = &\nabla P(t_0) \cdot\nabla^{\perp} \xi(t_0) + \nabla Q_k(t_0) \cdot \nabla^{\perp} \eta^k(t_0) \notag\\
&+ \nabla R_k(t_0) \cdot \nabla^{\perp} u^k(t_0) + \text{div } ( Q_k(t_0) \nabla \zeta^k(t_0))\,.
\end{align}
\end{lemma}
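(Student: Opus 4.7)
The plan is to take the divergence of Rivi\`ere's identity \eqref{P-4} and decompose the result into the four pieces required by \eqref{P4}. Starting from $\nabla P^i_j = P^i_m \nabla^{\perp}\xi^m_j - \Omega^i_z P^z_j$ and using $\on{div}\,\nabla^{\perp} = 0$, one obtains
\[
\Delta P^i_j = \nabla P^i_m\cdot\nabla^{\perp}\xi^m_j \,-\, \on{div}(\Omega^i_z P^z_j).
\]
The first summand is already the $\nabla P\cdot\nabla^{\perp}\xi$ term of \eqref{P4}, with $\|\nabla\xi\|_{L^{2}} \le C\sqrt{\varepsilon_0}$ from \eqref{P-2}. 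For the second, write $\Omega^i_z = F^i_{zk}(u)\nabla u^k$ with $F^i_{zk} := A^i_{z,k}(u) - A^z_{i,k}(u)$ and expand by the product rule,
\[
\on{div}(\Omega^i_z P^z_j) = \nabla[F^i_{zk}(u)P^z_j]\cdot\nabla u^k \,+\, F^i_{zk}(u)P^z_j \,\Delta u^k,
\]
and substitute the heat-flow equation $\Delta u^k = -A^k_{lm}(u)\nabla u^l\cdot\nabla u^m + u^k_t$. This splits the expression into a linear-in-$u_t$ piece and a quadratic-in-$\nabla u$ piece.

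The $u_t$-piece is absorbed via an auxiliary Poisson problem: let $\zeta^k \in W^{2,2}_0(B_1)$ solve $\Delta\zeta^k = u^k_t$ in $B_1$ with $\zeta^k = 0$ on $\partial B_1$. This is well-posed because the boundary data $\chi$ is time-independent, so $u_t|_{\partial B_1}=0$, and standard elliptic regularity yields the estimate \eqref{TEMP}. Setting $Q_k := -F^i_{zk}(u)P^z_j$ (with the free indices $i,j$ suppressed), the scalar identity $Q_k\,u^k_t = Q_k\,\Delta\zeta^k = \on{div}(Q_k\nabla\zeta^k) - \nabla Q_k\cdot\nabla\zeta^k$ provides the $\on{div}(Q_k\nabla\zeta^k)$ summand of \eqref{P4}; the residual $-\nabla Q_k\cdot\nabla\zeta^k$ is then converted to Jacobian form by expanding $\nabla Q_k$ by the chain rule and substituting \eqref{P-4} for the $\nabla P^z_j$ factor that appears. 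This produces a clean $\nabla^{\perp}\xi\cdot\nabla\zeta^k$ contribution (already a Jacobian) and residual $\nabla u\cdot\nabla\zeta^k$ pieces that are regrouped into $\nabla Q_k\cdot\nabla^{\perp}\eta^k$ after a Hodge-type decomposition that defines the auxiliary functions $\eta^k$.

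The quadratic-in-$\nabla u$ piece must then be rewritten as $\nabla R_k\cdot\nabla^{\perp}u^k + \nabla Q_k\cdot\nabla^{\perp}\eta^k$. The same toolbox applies: the chain-rule expansion
\[
\nabla[F^i_{zk}(u)P^z_j] = F^i_{zk,m}(u)P^z_j\nabla u^m + F^i_{zk}(u)\nabla P^z_j,
\]
with \eqref{P-4} once again substituted for $\nabla P^z_j$, together with the antisymmetry $F^i_{zk} = -F^z_{ik}$ and the symmetry of $\nabla u^l\cdot\nabla u^k$ in $(l,k)$, eliminates the symmetric contractions and leaves terms that align either as $\nabla(\cdot)\cdot\nabla^{\perp}u^k$ (defining $R_k$ from $F(u)$ and $P$) or as $\nabla(\cdot)\cdot\nabla^{\perp}\eta^k$ (contributing further pieces to $Q_k$ and $\eta^k$). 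Since $Q_k$ and $R_k$ are algebraic in $F(u)$ and $P$, the bounds $\|\nabla Q_k\|_{L^2}, \|\nabla R_k\|_{L^2}, \|Q_k\|_{L^{\infty}}, \|R_k\|_{L^{\infty}} \le C\sqrt{\varepsilon_0}$ follow from smoothness and compactness of $\mathcal{N}$, from $\|P\|_{L^{\infty}} = 1$ (as $P \in SO(n)$), and from $\|\nabla u\|_{L^2}, \|\nabla P\|_{L^2} \le C\sqrt{\varepsilon_0}$. I expect the principal obstacle to be exactly this last algebraic regrouping: one must verify that the antisymmetry of $\Omega$ together with the heat-equation substitution conspire to package \emph{every} remaining quadratic-in-$\nabla u$ term into one of the four admissible forms of \eqref{P4}, without leaving any uncontrolled residue.
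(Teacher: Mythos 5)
Your proposal takes a genuinely different route from the paper's proof, and the route has a gap that you yourself flag at the end: the quadratic-in-$\nabla u$ residue does \emph{not} package into the four admissible forms by antisymmetry.

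Let me be concrete about where the plan fails. After taking $\operatorname{div}$ of \eqref{P-4} and expanding by the product rule together with the heat equation, you are left (among other things) with terms of the schematic shape
\begin{equation*}
F^i_{zk,m}(u)\,P^z_j\,\nabla u^m\cdot\nabla u^k, \qquad
F^i_{zk}(u)F^z_{ql}(u)\,P^q_j\,\nabla u^l\cdot\nabla u^k, \qquad
F^i_{zk}(u)\,P^z_j\,A^k_{lm}(u)\,\nabla u^l\cdot\nabla u^m .
\end{equation*}
These involve the symmetric contraction $\nabla u^a\cdot\nabla u^b$, not the antisymmetric Jacobian $\nabla u^a\cdot\nabla^{\perp}u^b$. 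The antisymmetry $F^i_{zk}=-F^z_{ik}$ is in the pair $(i,z)$, which is a different index pair from the one in which $\nabla u^l\cdot\nabla u^k$ (and $A^k_{lm}$) is symmetric; it provides no cancellation here. Nor does \eqref{P-4} help: substituting it for $\nabla P^z_j$ produces one Jacobian piece in $(\xi,u)$ but also another $\nabla u\cdot\nabla u$ piece of the second type above. There is no algebraic identity that converts $\nabla f\cdot\nabla u^k$ into $\nabla g\cdot\nabla^{\perp}u^k + \nabla g'\cdot\nabla^{\perp}\eta^k$, and the ``Hodge-type decomposition'' you invoke in passing would have to do all the work --- at which point it is no longer a remark but the entire proof.

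The paper's proof introduces precisely this missing device \emph{before} expanding the divergence. It first applies the Hodge decomposition to the conserved vector field,
\begin{equation*}
A\nabla u + B\nabla^{\perp} u \;=\; \nabla^{\perp}\eta + \nabla\zeta ,
\end{equation*}
uses the almost conservation law \eqref{A-B-3-3} to get $\Delta\zeta = Au_t$ (hence $\zeta\in W^{2,2}_0$ with \eqref{TEMP}), and then \emph{inverts} to write
\begin{equation*}
\nabla u^{l}\;=\; (A^{-1})^{l}_{k}\nabla^{\perp}\eta^k \;-\; (A^{-1}B)^{l}_{k}\nabla^{\perp}u^k \;+\; (A^{-1})^{l}_{k}\nabla\zeta^k .
\end{equation*}
Substituting this into $\operatorname{div}(\Omega^i_z P^z_j)$ makes every summand of the form $\operatorname{div}(f\,\nabla^{\perp}g)=\nabla f\cdot\nabla^{\perp}g$ or $\operatorname{div}(Q_k\nabla\zeta^k)$, so the Jacobian structure appears immediately with no symmetric $\nabla u\cdot\nabla u$ leftover. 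In short: the paper never touches $\Delta u$ via the heat equation at this point, and that is not an incidental choice but the step that makes the decomposition close. Your $\zeta$-equation ($\Delta\zeta^k=u^k_t$) is close in spirit to the paper's ($\Delta\zeta=Au_t$), but without the inverted Hodge decomposition feeding $\nabla u$ into the divergence, the quadratic terms remain uncontrolled.
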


\begin{proof}
We omit the index $t_0$ in the proof. Hodge decomposition and the estimates for the $L^\infty$-norms of $A$ and $B$ imply the existence of $\eta\in W^{1,2}(B_1,\mathbb{R}^n)$ and $\zeta\in W_0^{1,2}(B_1,\mathbb{R}^n)$ such that
\begin{equation}\label{A-B-6-6}
\nabla^{\perp} \eta + \nabla\zeta\,=\, A\nabla u + B\nabla^{\perp} u
\end{equation}
with
\begin{equation}\label{A-B-7-7}
\|\nabla \eta\|_{L^2(B_1)} + \|\nabla \zeta\|_{L^2(B_1)}\leq C \|\nabla u\|_{L^2(B_1)}\leq C\sqrt{\varepsilon_0}\,.
\end{equation}
Moreover, by the ``almost'' conservation law \eqref{A-B-3-3} we have
$$\Delta \zeta\,=\, Au_t \,\,\in\, \,L^2(B_1)\quad\text{and}\quad \zeta|_{\partial B_1} = 0\,, $$
which gives \eqref{TEMP} by the standard $L^p$-theory. Multiplying both sides of equation \eqref{A-B-6-6} by $A^{-1}$ from the left gives (with indices)
\begin{equation}\label{A-B-8-8}
\nabla u^{l}\,=\, (A^{-1})^{l}_{k}\nabla^{\perp} \eta^k - (A^{-1} B)^{l}_{k} \nabla^{\perp} u^k + (A^{-1})^{l}_{k}\nabla\zeta^k\,,\quad l=1,2,...,n\,.
\end{equation}
Taking divergence on both sides of equation \eqref{P-4} yields
\begin{equation} \label{A-B-9-9}
\Delta P^i_{j} \,=\, \nabla P^i_m\cdot\nabla^{\perp} \xi^m_j - \text{div }(\Omega^i_z \,P^z_j)\,,\quad 1\leq i,j\leq n\,.
\end{equation}

Since $\Omega^i_z = [A^i(u)_{z,l} - A^z(u)_{i,l}]\nabla u^l$, combining \eqref{A-B-8-8} and \eqref{A-B-9-9} gives
\begin{align}
\Delta P^i_{j} \,= &\,\nabla P^i_m\cdot\nabla^{\perp} \xi^m_j- \text{div }\big[(A^i(u)_{z,l} - A^z(u)_{i,l})\,\big((A^{-1})^{l}_{k}\nabla^{\perp} \eta^k \notag\\
&\,\quad \quad \quad \quad \quad \quad \quad \quad \quad \quad - (A^{-1} B)^{l}_{k} \nabla^{\perp} u^k + (A^{-1})^{l}_{k}\nabla\zeta^k\big)\, P^z_j\big]\notag\\
=& \,\nabla P^i_m\cdot\nabla^{\perp} \xi^m_j - \nabla \left[(A^i(u)_{z,l} - A^z(u)_{i,l}) P^z_j (A^{-1})^l_k\right]\cdot \nabla^{\perp} \eta^k \\
&\,+ \nabla \left[(A^i(u)_{z,l} - A^z(u)_{i,l}) P^z_j (A^{-1} B)^l_k\right]\cdot \nabla^{\perp} u^k \notag\\
& \,- \text{div }\left[\left( (A^i(u)_{z,l} - A^z(u)_{i,l}) P^z_j (A^{-1})^l_k\right)\nabla \zeta^k\right] \,.\notag
\end{align}
Defining $(Q_k)^i_j = -(A^i(u)_{z,l} - A^z(u)_{i,l}) P^z_j (A^{-1})^l_k$ and $(R_k)^i_j=(A^i(u)_{z,l}- A^z(u)_{i,l})\cdot$ $P^z_j (A^{-1} B)^l_k\,,$ where $1\leq k, i,j \leq n$, completes the proof.
\end{proof}

Next we prove a local estimate on the oscillation of the matrix $P$ based on Lemma \ref{P2}. A key observation here is that whether a function is in the local Hardy space $h^1(B_1)$ essentially depends on its local behavior (see Definition \ref{10003}). This local oscillation estimate on $P$ provides important information that we need to control the local behavior of $|\nabla u|^2$. This point will become apparent in Section \ref{Sec4}. As we shall see, the Jacobian structure of $\Delta P$ enters in a crucial way.

\begin{lemma}\label{PCloseTOIDllz1}
Let $u(x,t)$ be as in Theorem \ref{MainThm}, then for any $t_0 \in [1,\infty)$ such that $u_t(\cdot, t_0)\in L^2(B_1)$, any $x\in B_1$, any $r>0$ such that $B_{2r}(x) \subset B_1$ and any $y\in B_{r}(x)$ we have
\begin{equation}\label{U-1}
|P(y,t_0) - P(x,t_0)|\,\leq\, C \left(\sqrt{\varepsilon_0}+\|u_t(t_0)\|_{L^2(B_1)}\right)\,.
\end{equation}
\end{lemma}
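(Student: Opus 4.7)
The plan is to fix the time slice $t_0$ (suppressed from now on) and control $|P(y) - P(x)|$ via a Hodge-type splitting $P = P_1 + P_2$ on the concentric larger ball $B_{2r}(x)$, where $P_2$ is harmonic on $B_{2r}(x)$ with $P_2 = P$ on $\partial B_{2r}(x)$ and $P_1 := P - P_2 \in W^{1,2}_0(B_{2r}(x))$. For the harmonic piece, subharmonicity of $|\nabla P_2|^2$ together with the mean value property yield, for $y \in B_r(x)$,
\begin{equation*}
|P_2(y) - P_2(x)| \,\leq\, r \sup_{B_r(x)}|\nabla P_2| \,\leq\, C\|\nabla P_2\|_{L^2(B_{2r}(x))} \,\leq\, C\bigl(\sqrt{\varepsilon_0} + \|\nabla P_1\|_{L^2(B_{2r}(x))}\bigr),
\end{equation*}
using also $\|\nabla P\|_{L^2(B_1)} \leq C\sqrt{\varepsilon_0}$ from Theorem~\ref{PCloseTOID}. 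Thus the problem reduces to bounding $\|P_1\|_{L^\infty(B_{2r}(x))}$ and $\|\nabla P_1\|_{L^2(B_{2r}(x))}$.

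For $P_1$ I would invoke Lemma~\ref{P2} and linearly split $P_1 = P_1^{(a)} + P_1^{(b)} + P_1^{(c)} + P_1^{(d)}$ according to the four summands of $\Delta P$, each $P_1^{(\star)}$ solving the corresponding Dirichlet problem with zero boundary values on $\partial B_{2r}(x)$. The first three right-hand sides $\nabla P \cdot \nabla^\perp \xi$, $\nabla Q_k \cdot \nabla^\perp \eta^k$ and $\nabla R_k \cdot \nabla^\perp u^k$ are Jacobian determinants whose factors have $L^2$-norms bounded by $C\sqrt{\varepsilon_0}$; applying Wente's Lemma~\ref{Wen} (scale-invariantly rescaled to $B_{2r}(x)$) gives an $L^\infty \cap W^{1,2}$-bound of order $\varepsilon_0$ for these three pieces.

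The fourth piece $P_1^{(d)}$, solving $\Delta P_1^{(d)} = \mathrm{div}(Q_k \nabla \zeta^k)$, is the main technical obstacle since its right-hand side is not a Jacobian and Wente does not directly apply. For its $L^2$-gradient bound, testing against itself yields $\|\nabla P_1^{(d)}\|_{L^2(B_{2r}(x))} \leq \|Q_k\|_{L^\infty}\|\nabla \zeta^k\|_{L^2(B_{2r}(x))} \leq C\sqrt{\varepsilon_0}$. For the $L^\infty$-bound I would exploit the extra regularity $\|\zeta\|_{W^{2,2}(B_1)} \leq C\|u_t\|_{L^2(B_1)}$ provided by Lemma~\ref{P2}, which in turn comes from the almost-conservation law $\Delta\zeta = Au_t \in L^2$. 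Fix any $p > 2$: the Sobolev embedding $W^{2,2}(B_1) \hookrightarrow W^{1,p}(B_1)$ puts $Q_k \nabla \zeta^k \in L^p(B_{2r}(x))$ with norm $\leq C\|u_t\|_{L^2}$; scale-invariant Calder\'on-Zygmund for divergence-form data then gives $\|\nabla P_1^{(d)}\|_{L^p(B_{2r}(x))} \leq C\|u_t\|_{L^2}$, and the scaled Morrey embedding $W^{1,p}_0(B_{2r}(x)) \hookrightarrow C^0$ --- whose constant is controlled by $r^{1-2/p} \leq 1$ since $r \leq 1/2$ --- finally produces $\|P_1^{(d)}\|_{L^\infty(B_{2r}(x))} \leq C\|u_t\|_{L^2}$.

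Combining the four pieces gives $\|P_1\|_{L^\infty(B_{2r}(x))} + \|\nabla P_1\|_{L^2(B_{2r}(x))} \leq C(\sqrt{\varepsilon_0} + \|u_t\|_{L^2})$, and feeding this back into the harmonic estimate of the first paragraph together with the triangle inequality $|P(y) - P(x)| \leq |P_1(y)| + |P_1(x)| + |P_2(y) - P_2(x)|$ yields the claimed bound \eqref{U-1}. The argument hinges on the special algebraic structure of the decomposition in Lemma~\ref{P2}: three of the four pieces of $\Delta P$ are Jacobians tailor-made for Wente, while the fourth, though not Jacobian, inherits enough extra $W^{2,2}$-regularity from the heat flow equation to be tamed by ordinary elliptic theory on small balls.
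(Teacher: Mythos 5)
Your proof is correct and follows essentially the same strategy as the paper: split $P$ into a piece solving the Dirichlet problem with right-hand side given by the Jacobian structure from Lemma~\ref{P2} (handled by Wente for the three Jacobian terms and by $L^p$-theory plus the $W^{2,2}$-regularity of $\zeta$ for the $\operatorname{div}(Q_k\nabla\zeta^k)$ term) and a harmonic remainder whose oscillation is controlled by the mean value property and the global $L^2$-bound on $\nabla P$. The only cosmetic difference is that you carry out the Hodge-type splitting locally on $B_{2r}(x)$ whereas the paper performs it once on all of $B_1$ and only then localizes the harmonic-remainder estimate; both variants work, and your treatment of the fourth term is if anything more explicit than the paper's.
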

\begin{proof} We will omit the index $t_0$ in the proof.
Let $\widetilde{P} \in W^{1,2}(B_1, M_n(\mathbb{R}))$ be the weak solution of
$$
  \left\{
   \aligned
   \Delta \widetilde{P}\,=\,& \,\nabla P \cdot\nabla^{\perp} \xi + \nabla Q_k \cdot \nabla^{\perp} \eta^k + \nabla R_k \cdot \nabla^{\perp} u^k + \text{div } ( Q_k \nabla \zeta^k)  &&\text{in  } B_1 \,, \\
   \widetilde{P} \, = \, &0  &&\text{on  }\partial B_1\,,\\
   \endaligned
  \right.
$$
where $Q_k$, $R_k$, $\eta^k$ and $\zeta^k$'s are from Lemma \ref{P2}.

Then by Wente's lemma \ref{Wen} and the standard $L^p$-theory (and $W^{2,2}(B_1)\hookrightarrow C^0(B_1)$), we have $\widetilde{P}\in C^0(B_1, M_n(\mathbb{R}))$ and
\begin{equation}\label{100}
\|\widetilde{P}\|_{L^\infty(B_1)}+\|\nabla \widetilde{P}\|_{L^2(B_1)}\,\leq \, C \left(\varepsilon_0+\|u_t(t_0)\|_{L^2(B_1)}\right)\,.
\end{equation}
Since $\Delta (P - \widetilde{P}) = 0$ in $B_1$, we know that $V = P - \widetilde{P} \in C^{\infty}(B_1, M_n(\mathbb{R}))$ is harmonic. Now for any $x\in B_1$, any $r>0$ such that $B_{2r}(x) \subset B_1$ and any $y\in B_{r}(x)$ we have
\begin{align}\label{1000}
&|V(y) - V(x)|\,\leq\, Cr\|\nabla V\|_{L^{\infty}(B_r(x))} \leq\, Cr^{-1}\|\nabla V\|_{L^1(B_{2r}(x))}\,\notag\\
\leq \, & C\|\nabla V\|_{L^2(B_{2r}(x))}
\,\leq\, C \left(\|\nabla P\|_{L^2(B_{2r}(x))} + \|\nabla \widetilde{P}\|_{L^2(B_{2r}(x))}\right)\\
\,\leq\,&C \left(\sqrt{\varepsilon_0}+\|u_t(t_0)\|_{L^2(B_1)}\right) \,,\notag
\end{align}
where we have used the mean value property of $V$ and \eqref{100}, \eqref{P-2}. Combining \eqref{100} and \eqref{1000} yields that for any $x\in B_1$, any $r>0$ such that $B_{2r}(x) \subset B_1$ and any $y\in B_{r}(x)$ we have
\begin{equation}
|P(y, t_0)- P(x, t_0)|\,\leq\, C \left(\sqrt{\varepsilon_0}+\|u_t(t_0)\|_{L^2(B_1)}\right)\,,
\end{equation}
which gives the desired estimate \eqref{U-1}.
\end{proof}

\section{Validating \eqref{BoilDownTo} and proof of Theorem \ref{MainThm}}\label{Sec4}
With the results so far at our disposal, we are now in a position to validate \eqref{BoilDownTo}. As mention above, the local estimate on the oscillation of the transformation matrix $P$ in Lemma \ref{PCloseTOIDllz1} will be the key ingredient.
\begin{lemma}\label{10001} Let $u(x,t)$ be as in Theorem \ref{MainThm}, then for any $t_0 \in [1,\infty)$ such that $\|u_t(t_0)\|_{L^2(B_1)} < \sqrt{\varepsilon_0}$ we have
\begin{equation}
|\nabla u(t_0)|^2 \in h^1(B_1)
\end{equation}
with estimate
\begin{equation}
\||\nabla u(t_0)|^2\|_{h^1(B_1)}\leq C\varepsilon_0\,.
\end{equation}
\end{lemma}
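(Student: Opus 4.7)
The plan is to write $|\nabla u(t_0)|^2$ via the gauge/Hodge decomposition as a sum of quantities in the local Hardy space $h^1(B_1)$, each with norm at most $C\varepsilon_0$. First, the Hodge decomposition \eqref{A-B-6-6} gives $A\nabla u = \nabla^\perp\eta + \nabla\zeta - B\nabla^\perp u$ with $\eta,\zeta\in W^{1,2}(B_1)$ of norm $\leq C\sqrt{\varepsilon_0}$. The ``almost conservation law'' \eqref{A-B-3-3} yields $\Delta\zeta = Au_t$ with $\zeta|_{\partial B_1}=0$, hence by standard $L^p$-theory $\zeta\in W_0^{2,2}(B_1)$ with $\|\zeta\|_{W^{2,2}}\leq C\|u_t\|_{L^2}<C\sqrt{\varepsilon_0}$, so $\nabla\zeta\in L^p(B_1)$ for every $p<\infty$ by Sobolev embedding.

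The pivotal step is the pointwise identity $|\nabla u|^2 = \langle A\nabla u,\, A^{-T}\nabla u\rangle$, valid since $\sum_i A^i_j(A^{-1})^k_i = \delta^k_j$. Splitting $A^{-T} = (\widehat A + Id)^{-T}P^T = P^T + R$ with $\|R\|_{L^\infty}\leq C\sqrt{\varepsilon_0}$ (using $\|\widehat A\|_{L^\infty}\leq C\sqrt{\varepsilon_0}$ from \eqref{A-B-2}) gives
\[
|\nabla u|^2 = \langle\nabla^\perp\eta + \nabla\zeta - B\nabla^\perp u,\,P^T\nabla u\rangle + \langle A\nabla u,\,R\nabla u\rangle.
\]

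The plan is then to analyze each contribution in $h^1$ separately. The $\nabla\zeta$-piece lies in $L^q(B_1)$ for some $q>1$ by H\"older's inequality, hence in $h^1$ with norm $\leq C\varepsilon_0$. The $\nabla^\perp\eta$-piece $P^j_i\nabla^\perp\eta^i\cdot\nabla u^j$ is decomposed via $P^j_i\nabla^\perp\eta^i = \nabla^\perp(P^j_i\eta^i) - \eta^i\nabla^\perp P^j_i$: the first summand is a pure Jacobian of $W^{1,2}$ functions, in $\mathcal{H}^1$ by the Coifman-Lions-Meyer-Semmes theorem; the second is an $L^\infty$ factor $\eta^i$ (bounded by $C\sqrt{\varepsilon_0}$ via a Wente-type estimate applied to the Jacobian-like equation $\Delta\eta = \text{curl}(A\nabla u + B\nabla^\perp u)$, derivable using \eqref{A-B-1} and the flow equation \eqref{HHFNEW}) multiplying another Jacobian, controlled by the VMO-$\mathcal{H}^1$ product theorem (noting $W^{1,2}\cap L^\infty\subset\text{VMO}$ in 2D). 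The $B\nabla^\perp u$-piece splits into Jacobians $\nabla^\perp u^j\cdot\nabla u^k$ against small $L^\infty$ coefficients $B^i_j P^k_i$, handled similarly, and the error term $\langle A\nabla u,\,R\nabla u\rangle$ has the same structure with an extra $\sqrt{\varepsilon_0}$ factor. Lemma \ref{PCloseTOIDllz1} enters at this stage to allow direct estimates of the radial maximal function on balls $B_t(x)\subset B_{2t}(x)\subset B_1$, since it implies $P$ is approximately a constant orthogonal matrix there.

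The main technical obstacle is proving $h^1$-membership (as opposed to $L^1$-boundedness) for products of $W^{1,2}\cap L^\infty$ coefficients with $\mathcal{H}^1$-Jacobians; this is resolved via the Bonami-Iwaniec-Jones-Zinsmeister product theorem or, more elementarily, by a direct atomic decomposition built from Lemma \ref{PCloseTOIDllz1} that exploits the near-constancy of $P$ on each $B_t(x)$. Summing all bounds gives $\||\nabla u(t_0)|^2\|_{h^1(B_1)}\leq C\varepsilon_0$ as desired.
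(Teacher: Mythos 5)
Your decomposition strategy correctly isolates the same ingredients the paper uses (the Hodge decomposition $A\nabla u+B\nabla^\perp u=\nabla^\perp\eta+\nabla\zeta$, the $L^2$-bound $\Delta\zeta=Au_t$, the CLMS theorem, and Lemma~\ref{PCloseTOIDllz1}), but the assembly has a genuine gap at the very step you flag as the ``main technical obstacle.'' Multiplying an $\mathcal{H}^1$-Jacobian by a non-constant $W^{1,2}\cap L^\infty$ (or even VMO) coefficient does \emph{not} land back in $\mathcal{H}^1$: the Bonami--Iwaniec--Jones--Zinsmeister theorem you cite only places the product $b\cdot h$, for $b\in\text{BMO}$ and $h\in\mathcal{H}^1$, in $L^1+\mathcal{H}^{\log}$, and $\mathcal{H}^{\log}$ is strictly larger than $\mathcal{H}^1$. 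There is no bound of the form $\|bh\|_{\mathcal{H}^1}\leq C\|b\|_{\text{VMO}}\|h\|_{\mathcal{H}^1}$, and indeed such a bound is false, since a generic $L^\infty$ multiplier destroys the cancellation that $\mathcal{H}^1$-atoms require. The same problem recurs in your error term $\langle A\nabla u, R\nabla u\rangle$: writing ``has the same structure with an extra $\sqrt{\varepsilon_0}$'' does not help, since smallness in $L^1$ is exactly what is \emph{not} sufficient here (and $R$ is merely bounded, not constant, so re-substituting $A\nabla u=\nabla^\perp\eta+\nabla\zeta-B\nabla^\perp u$ into it regenerates the same variable-coefficient Jacobian products). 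So both resolutions you propose for the obstacle are either non-theorems or circular.

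The paper avoids this product problem entirely by \emph{not} attempting an exact global decomposition. The key is that $|\nabla u|^2\geq 0$, so a one-sided pointwise majorization suffices to control the maximal function $f^*(x)=\sup_t\phi_t*|\nabla u|^2(x)$. Using the oscillation bound of Lemma~\ref{PCloseTOIDllz1} to \emph{freeze} $P^T$ at the center point $x$ of each small ball, one shows
$\frac{1}{4}|\nabla u|^2(y)\leq(\nabla^\perp\eta+\nabla\zeta)\cdot(P^T(x)\nabla u)(y)$
for all $y$ in the ball, with the error $(P^T(y)-P^T(x))\nabla u$ absorbed by smallness (orthogonality of $P$ plus the $L^\infty$-bounds on $\widehat A$ and $B$ give the leading $\frac12|\nabla u|^2$). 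Because $P^T(x)$ is now a \emph{constant} matrix (with $|(P^T(x))_{ij}|\leq 1$), it can be pulled out of the convolution $\phi_t*(\cdot)(x)$, and the $\eta$-piece is a genuine constant-coefficient Jacobian that falls squarely under CLMS, while the $\zeta$-piece is in $L^q$, $q>1$. Your option ``(b)'' — exploiting the near-constancy of $P$ on each $B_t(x)$ — is gesturing at exactly this mechanism, but as written it remains a placeholder; the essential move is to replace the attempted exact identity by a pointwise \emph{inequality} with center-frozen coefficients, which is what removes the need for any product theorem.
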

\begin{remark}
Lemma \ref{10001} continues to hold if we replace $\Omega^i_j = [A^i(u)_{j,l}-A^j(u)_{i,l}]\nabla u^l$ in \eqref{SFF} by a more general $\Omega$ in the form $\Omega^i_j = \sum_{l=1}^n f^i_{jl} \nabla u^l + g^i_{jl} \nabla^{\perp} u^l$, see \cite{LL}. Moreover, the condition $\|u_t(t_0)\|_{L^2(B_1)} < \varepsilon_0$ can be replaced by that $\|u_t(t_0)\|_{L^p(B_1)}$ is sufficiently small for some $p>1$.
\end{remark}
\begin{proof}(of Lemma \ref{10001}) By the assumption $\|u_t(t_0)\|_{L^2(B_1)} < \sqrt{\varepsilon_0}$ and Lemma \ref{PCloseTOIDllz1}, for any $x\in B_1$, any $r>0$ such that $B_{2r}(x) \subset B_1$ and any $y\in B_{r}(x)$ we have
\begin{equation}\label{TEMP2}
|P(y,t_0)-P(x,t_0)| \,\leq \, C \sqrt{\varepsilon_0}\,.
\end{equation}
We will omit the index $t_0$ from now on. By Proposition \ref{XI}, Theorem \ref{PCloseTOID} and Theorem \ref{PCloseTOIDllz}, for any $x\in B_1$, any $r>0$ such that $B_{2r}(x) \subset B_1$ and any $y\in B_{r}(x)$ we have (choosing $\varepsilon_0$ sufficiently small)
\begin{align}
0&\leq \,\frac{1}{2}|\nabla u|^2(y) \leq \left(A\nabla u + B\nabla^{\perp} u\right) \cdot (P^T\nabla u)(y)\notag\\
&=  \left(A\nabla u + B\nabla^{\perp} u\right)\cdot \left[\left(P^T(x) + \left(P^T-P^T(x)\right)\right)\nabla u\right](y)\,,
\end{align}
and therefore by \eqref{A-B-6-6} and \eqref{TEMP2}
\begin{align}
&(\nabla^{\perp}\eta + \nabla\zeta)\cdot \left(P^T(x)\nabla u\right)(y) = \left(A\nabla u + B\nabla^{\perp} u\right) \cdot \left(P^T(x)\nabla u\right)(y)\notag\\
\geq &\,\frac{1}{2}|\nabla u|^2(y) - \left(A\nabla u + B\nabla^{\perp} u\right)\cdot \left[\left(P^T-P^T(x)\right)\nabla u\right](y)\geq \,\frac{1}{4}|\nabla u|^2(y)\,.\label{PPP}
\end{align}
Now we choose a function
\begin{equation}\label{funcphi}\phi \in C^{\infty}_0(B_1) \text{ with } \phi\geq 0, \,\text{spt}(\phi) \subseteq B_{\frac{1}{2}}, \,\phi =2 \text{ on } B_{\frac{3}{8}}, \text{ and } \int_{B_1} \phi \,dx =1\,.
\end{equation}
Moreover, we additionally assume that $\|\nabla \phi\|_{L^\infty(B_1)}\leq 100$. Using \eqref{TEMP} and \eqref{PPP}, one verifies directly that (by Definition \ref{10003})
\begin{align*}
\||\nabla u|^2\|_{h^1(B_1)}=& \,\int_{B_1} \sup_{0<t< 1-|x|} \phi_t \ast |\nabla u|^2dx\\
\leq& \,4\int_{B_1} \sup_{0<t< 1-|x|}\phi_t \ast \left((\nabla^{\perp}\eta + \nabla\zeta)\cdot (P^T(x)\nabla u)\right)dx\\
= & 4\int_{B_1} \sup_{0<t< 1-|x|} \phi_t \ast \left[(P^T(x))_{ij}\left(\nabla^{\perp}\eta^i \cdot \nabla u^j + \nabla\zeta^i \cdot \nabla u^j\right)\right]dx\\
\leq &\,C \,\sum_{i,j=1}^n\left(\|\nabla^\perp \eta^i \cdot \nabla u^j\|_{h^1(B_1)} + \|\nabla\zeta^i\|_{W^{1,2}(B_1)} \|\nabla u^j\|_{L^2(B_1)}\right)\\
\leq &C\, \|\nabla^{\perp}\eta\|_{L^2(B_1)}\|\nabla u\|_{L^2(B_1)} + C\sqrt{\varepsilon_0}\|u_t\|_{L^2(B_1)} \leq  C \varepsilon_0\,,
\end{align*}
where we have used the facts
\begin{enumerate}
\item $\nabla^{\perp}\eta^i \cdot \nabla u^j \in h^1(B_1)$ and $\|\nabla^{\perp}\eta^i \cdot \nabla u^j\|_{h^1(B_1)}\leq C\|\nabla \eta\|_{L^2(B_1)}\|\nabla u\|_{L^2(B_1)}$ for all $i,j=1,2,...,n$;\label{TEMP000}
\item $\|\nabla\zeta^i \cdot \nabla u^j\|_{L^p(B_1)}\leq C\|\nabla\zeta^i\|_{W^{1,2}(B_1)} \|\nabla u^j\|_{L^2(B_1)}$ for any $1<p<2$ and $\|f\|_{h^1(B_1)} \leq C\|f\|_{L^p(B_1)}$ for any $p>1$\,.
\end{enumerate}
 To see \eqref{TEMP000}, we first extend $\eta^i - \frac{1}{|B_1|}\int_{B_1}\eta^i$ and $u^j - \frac{1}{|B_1|}\int_{B_1}u^j$ from $B_1$ to $\mathbb{R}^2$ which yields the existence of $\tilde{\eta}^i, \tilde{u}^j \in W^{1,2}_c(\mathbb{R}^2)$ such that
\begin{equation}
\int_{\mathbb{R}^2}|\nabla \tilde{\eta}^i|^2 \leq C \int_{B_1}|\nabla \eta^i|^2\quad \text{and}\quad \int_{\mathbb{R}^2}|\nabla \tilde{u}^j|^2 \leq C \int_{B_1}|\nabla u^j|^2
\end{equation}
and
\begin{equation}\label{10006}
\nabla \tilde{\eta}^i = \nabla \eta^i \quad \text{and} \quad \nabla \tilde{u}^j = \nabla u^j \quad \text{a.e. in } B_1\,.
\end{equation}
Then by the results of \cite{CLMS} we know that
\begin{align}
\|\nabla^\perp \tilde{\eta}^i \cdot \nabla \tilde{u}^j\|_{\mathcal{H}^1(\mathbb{R}^2)}:&= \int_{\mathbb{R}^2} \sup_{\phi\in \mathcal{T}}\sup_{t>0}\left| \int_{B_t(x)}\frac{1}{t^2}\phi\left(\frac{x-y}{t}\right)\left(\nabla^\perp \tilde{\eta}^i \cdot \nabla \tilde{u}^j\right)(y)dy\right|dx\notag\\
& \leq C \|\nabla \tilde{\eta}^i\|_{L^2(\mathbb{R}^2)}\|\nabla \tilde{u}^j\|_{L^2(\mathbb{R}^2)} \leq C \|\nabla \eta\|_{L^2(B_1)}\|\nabla u\|_{L^2(B_1)}\,,\label{10007}
\end{align}
where $\mathcal{T} = \{\phi \in C^{\infty}(\mathbb{R}^2): \text{spt}(\phi)\subset B_1 \text{ and } \|\nabla \phi\|_{L^\infty} \leq 100\}$. By \eqref{10006}, \eqref{10007} and Definition \ref{10003}, it is clear that
\begin{align}\label{TEMP909}
\|\nabla^\perp \eta^i \cdot \nabla u^j\|_{h^1(B_1)} &= \|\nabla^\perp \tilde{\eta}^i \cdot \nabla \tilde{u}^j\|_{h^1(B_1)}\notag\\
&\leq \|\nabla^\perp \tilde{\eta}^i \cdot \nabla \tilde{u}^j\|_{\mathcal{H}^1(\mathbb{R}^2)} \leq C \|\nabla \eta\|_{L^2(B_1)}\|\nabla u\|_{L^2(B_1)}.
\end{align}
This completes the proof of the lemma.
\end{proof}

Now since $u(x,t)\in W^{1,2}\cap C^{\infty}(B_1\times[1,\infty),\mathcal{N})$ and the energy $E(u(\cdot,t))$ is non-increasing along the flow as shown in \eqref{EnergyDrease}, there exists $T_0 \geq 1$ such that
\begin{equation}\label{TEMP51}
\|u_t(T_0)\|_{L^2(B_1)} < \sqrt{\varepsilon_0}\,.
 \end{equation}
 Then by Lemma \ref{10001} we know that $|\nabla u(T_0)|^2 \in h^1(B_1)$ with estimate
\begin{equation}\label{TEMP_LLZ}
\||\nabla u(T_0)|^2\|_{h^1(B_1)}\leq C\varepsilon_0\,.
\end{equation}

Therefore, in view of Lemma \ref{10001}, in order to validate the global estimate \eqref{BoilDownTo} we are left to show
\begin{equation}
\|u_t(t_0)\|_{L^2(B_1)} < \sqrt{\varepsilon_0}\quad \text{for all } t_0 \geq T_0\,.
\end{equation}

We will next show this is indeed the case.

\begin{lemma}Let $u(x,t)$ be as in Theorem \ref{MainThm}, then there exists $T_0>0$ such that
\begin{equation}
\|u_t(t_0)\|_{L^2(B_1)} < \sqrt{\varepsilon_0}\quad \text{for all } t_0 \geq T_0\,.
\end{equation}
\begin{proof}
Let $T_0\geq 1$ be as in \eqref{TEMP51} so that we have $\|u_t(T_0)\|_{L^2(B_1)} < \sqrt{\varepsilon_0}$. Since $u(x,t)\in W^{1,2}\cap C^{\infty}(B_1\times[1,\infty),\mathcal{N})$ and by the continuity of $\int_{B_1}|u_t(t)|^2$ in $t$, there exists $\delta = \delta(T_0, \varepsilon_0)>0$ such that for any $t_0\in [T_0, T_0+\delta]$ we have
\begin{equation}\label{TEMP99}
\|u_t(t_0)\|_{L^2(B_1)} < 2\sqrt{\varepsilon_0}\,.
\end{equation}
Therefore by our previous arguments (especially Theorem \ref{CDSTHM}, Lemma \ref{10001} and \eqref{TEMP_LLZ} with $T_0$ replaced by $t_0$), Lemma \ref{utEst1} applies to any subinterval of $[T_0, T_0+\delta]$ and yields
\begin{equation}\label{utnon}
\int_{B_1} |u_t(t_2)|^2 \,\leq\,  \int_{B_1} |u_t(t_1)|^2 \quad \text{for any } t_1, t_2 \in [T_0, T_0+\delta]\,.
\end{equation}
This shows, instead of \eqref{TEMP99}, for any $t_0 \in [T_0, T_0+\delta]$ we have
\begin{equation}
\|u_t(t_0)\|_{L^2(B_1)} \leq \|u_t(T_0)\|_{L^2(B_1)}  < \sqrt{\varepsilon_0}\,.
\end{equation}
We can then continue and iterate this process beyond $T_0+\delta$ and we see that $\int_{B_1} |u_t(t)|^2$ is indeed non-increasing along the flow after $T_0$.
\end{proof}
\end{lemma}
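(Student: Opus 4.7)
The plan is to run a continuity/bootstrap argument anchored at some time $T_0$ where $\|u_t(T_0)\|_{L^2(B_1)}<\sqrt{\varepsilon_0}$, then propagate this bound forward using the monotonicity statement in Lemma \ref{utEst1}. The existence of such a $T_0$ is the easy starting point: by the energy dissipation identity \eqref{EnergyDrease} we have $\int_1^\infty\int_{B_1}|u_t|^2\,dx\,dt\leq E(u_0)/2<\varepsilon_0/2$, so by interior parabolic regularity (Theorem \ref{Regularity}) and continuity of $t\mapsto\int_{B_1}|u_t(t)|^2$, there must exist some $T_0\geq 1$ with $\|u_t(T_0)\|_{L^2(B_1)}<\sqrt{\varepsilon_0}$.

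Next, by continuity of $t\mapsto \|u_t(t)\|_{L^2(B_1)}$ on $[1,\infty)$, we may pick $\delta>0$ (depending on $T_0$ and $\varepsilon_0$) so that $\|u_t(t_0)\|_{L^2(B_1)}<2\sqrt{\varepsilon_0}$ for every $t_0\in[T_0,T_0+\delta]$. For each such $t_0$ this $L^2$-bound on $u_t$ is precisely the hypothesis of Lemma \ref{10001}, so $|\nabla u(t_0)|^2\in h^1(B_1)$ with $\||\nabla u(t_0)|^2\|_{h^1(B_1)}\leq C\varepsilon_0$. Theorem \ref{CDSTHM} then produces, at every such time $t_0$, a function $\psi(\cdot,t_0)\in W_0^{1,2}\cap L^\infty(B_1)$ solving $\Delta\psi=|\nabla u(t_0)|^2$ with $\|\psi\|_{L^\infty(B_1)}+\|\nabla\psi\|_{L^2(B_1)}\leq C\varepsilon_0$. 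Hence the hypotheses \eqref{PSI1-1-1}--\eqref{PSI2-2-2} of Lemma \ref{utEst1} are verified for every $t_0\in[T_0,T_0+\delta]$, and applying that lemma to every subinterval $[t_1,t_2]\subset[T_0,T_0+\delta]$ gives
\begin{equation}
\int_{B_1}|u_t(t_2)|^2\leq\int_{B_1}|u_t(t_1)|^2\qquad\text{for all }T_0\leq t_1\leq t_2\leq T_0+\delta.
\end{equation}
In particular $\|u_t(t_0)\|_{L^2(B_1)}\leq \|u_t(T_0)\|_{L^2(B_1)}<\sqrt{\varepsilon_0}$ on the whole interval $[T_0,T_0+\delta]$, improving the \emph{a priori} bound $2\sqrt{\varepsilon_0}$ used to get started.

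Finally, I would iterate: having established $\|u_t(T_0+\delta)\|_{L^2(B_1)}<\sqrt{\varepsilon_0}$, I repeat the above argument with $T_0$ replaced by $T_0+\delta$, and then again, producing a sequence of overlapping intervals on which $t\mapsto\int_{B_1}|u_t|^2$ is non-increasing and bounded by $\varepsilon_0$. A standard open-closed continuation (or equivalently, defining $T^*:=\sup\{T\geq T_0:\|u_t(t)\|_{L^2(B_1)}<\sqrt{\varepsilon_0}\text{ for all }t\in[T_0,T]\}$ and showing $T^*=\infty$) then yields the conclusion for all $t_0\geq T_0$. The delicate point — the only place where circularity could hurt — is making sure the continuity radius $\delta$ used at each step does not shrink to zero; this is not an issue because the monotonicity conclusion of Lemma \ref{utEst1} is qualitative and the continuity of $\|u_t(\cdot)\|_{L^2(B_1)}$ is uniform on compact subintervals of $[1,\infty)$ by the smoothness of $u$ afforded by Theorem \ref{Regularity}, so the iteration extends the interval indefinitely.
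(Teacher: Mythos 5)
Your argument is essentially identical to the paper's: anchor at a time $T_0$ where $\|u_t(T_0)\|_{L^2(B_1)}<\sqrt{\varepsilon_0}$ (which the paper establishes just before the lemma via \eqref{TEMP51}), use continuity to get a short interval on which $\|u_t\|_{L^2}<2\sqrt{\varepsilon_0}$, feed this into Lemma~\ref{10001}, Theorem~\ref{CDSTHM}, and Lemma~\ref{utEst1} to upgrade to monotonicity of $\int_{B_1}|u_t|^2$, and iterate. Your explicit open-closed continuation makes the final ``iterate'' step a bit more rigorous than the paper's phrasing, and you have a small numerical slip in the opening (by \eqref{EnergyDrease} one gets $\int_1^\infty\int_{B_1}|u_t|^2\le E(u_0)<\varepsilon_0$, not $E(u_0)/2$), but neither affects the validity of the argument.
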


This completes the validation of \eqref{BoilDownTo} and therefore the assumptions in Lemmas \ref{GradEst1} and \ref{utEst1} in view of Theorem \ref{CDSTHM}, finishing the proof of our main Theorem \ref{MainThm} as shown in Section \ref{Sec1}.

 \end{document}